\documentclass[12pt]{article}

\usepackage{amsmath,amsfonts,amssymb,amsthm}
\usepackage{fullpage} 
\usepackage[colorlinks]{hyperref} 
\usepackage[parfill]{parskip} 
\usepackage{enumitem} 
\usepackage{graphicx}
\usepackage{xcolor}
\usepackage{mathtools}
\usepackage{dsfont}

\urlstyle{same}

\newtheorem{theorem}{Theorem}[section]
\newtheorem{thm}[theorem]{Theorem}
\newtheorem{lemma}[theorem]{Lemma}

\newtheorem{proposition}[theorem]{Proposition}
\newtheorem{conjecture}[theorem]{Conjecture}

\newtheorem{observation}[theorem]{Observation}
\newtheorem{definition}[theorem]{Definition}

\theoremstyle{remark}

\newcommand{\E}{\mathbb{E}}

\newcommand{\ex}{\text{ex}}

\DeclarePairedDelimiter\floor{\lfloor}{\rfloor}
\DeclarePairedDelimiter\ceil{\lceil}{\rceil}

\title{Lower Bounds on the Erd\H{o}s-Gy\'arf\'as Problem via Color Energy Graphs}
\author{
  J\'ozsef Balogh \footnote{Department of Mathematics, University of Illinois  Urbana-Champaign, Urbana, Illinois 61801, USA. E-mail: \texttt{jobal@illinois.edu}. Research supported by NSF RTG Grant DMS-1937241, NSF Grant DMS-1764123, Arnold O. Beckman Research Award (UIUC Campus Research Board RB 18132), the Langan Scholar Fund (UIUC), and the Simons Fellowship.}
 \and Sean English \footnote {Department of Mathematics, University of Illinois  Urbana-Champaign, Urbana, Illinois 61801, USA. E-mail: \texttt{senglish@illinois.edu}.}
 \and Emily Heath \footnote {Department of Mathematics, Iowa State University, Ames, Iowa 50011, USA. Email: \texttt{eheath@iastate.edu}. Research partially supported by NSF RTG Grant DMS-1937241 and NSF RTG Grant DMS-1839918.}
 \and Robert A.~Krueger \footnote {Department of Mathematics, University of Illinois  Urbana-Champaign, Urbana, Illinois 61801, USA. Email: \texttt{rak5@illinois.edu}. Research partially supported by NSF RTG Grant DMS-1937241, UIUC Campus Research Board Grant RB 18132, and NSF Graduate Research Fellowship Program under Grant No.~DGE 21-46756.}
  }
  
  \date{}

\begin{document}
\maketitle

\begin{abstract}
Given positive integers $p$ and $q$, a $(p,q)$-coloring of the complete graph $K_n$ is an edge-coloring in which every $p$-clique receives at least $q$ colors. Erd\H{o}s and Shelah posed the question of determining $f(n,p,q)$, the minimum number of colors needed for a $(p,q)$-coloring of $K_n$. 
In this paper, we expand on the color energy technique introduced by Pohoata and Sheffer to prove new lower bounds on this function, making explicit the connection between bounds on extremal numbers and $f(n,p,q)$. 
Using results on the extremal numbers of subdivided complete graphs, theta graphs, and subdivided complete bipartite graphs, we generalize results of Fish, Pohoata, and Sheffer, giving the first nontrivial lower bounds on $f(n,p,q)$ for some pairs  $(p,q)$ and improving previous lower bounds for other pairs. 

\medskip
\noindent\textbf{Keywords:} generalized Ramsey, color energy, local properties 

\noindent\textbf{2020 Mathematics Subject Classification:} 05C55, 05C35, 05C15
\end{abstract}

\section{Introduction}\label{sec::intro}

The \emph{Ramsey number} $r_k(p)$ is the minimum number of vertices $n$ for which every edge-coloring of the complete graph $K_n$ with $k$ colors must contain a monochromatic copy of the clique $K_p$. In 1975, Erd\H{o}s and Shelah \cite{E1,E2} introduced the following natural extension of the Ramsey number. Given positive integers $p$ and $q$ with $1\leq q\leq \binom{p}{2}$, a \emph{$(p,q)$-coloring} of $K_n$ is an edge-coloring of $K_n$ in which every $p$ vertices span a clique with at least $q$ colors. The \emph{$(p,q)$-coloring number} of $K_n$, denoted by $f(n,p,q)$, is the minimum number of colors needed to give a $(p,q)$-coloring of $K_n$. Here and throughout, we analyze the asymptotics of $f(n,p,q)$ in $n$, considering $p$, $q$, and related parameters to be constant. The asymptotic notation suppresses dependencies on these constants. It is worth noting that determining the values of $f(n,p,2)$ for all $n$ and $p$ is equivalent to determining the values of the multicolor Ramsey function $r_k(p)$ for all $k$ and $p$.  

These generalized Ramsey numbers were first studied systematically by Erd\H{o}s and Gy\'arf\'as~\cite{EG}. In addition to studying several small cases of $p$ and $q$, they also identified the values of $q$ as a function of $p$ for which $f(n, p, q)$ becomes linear in $n$, quadratic in $n$, and asymptotically equivalent to $\binom{n}{2}$. Namely, they showed that $q=\binom{p}{2}-p+3$ is the smallest value of $q$ for which $f(n,p,q)=\Omega(n)$ and $q=\binom{p}{2}-\lfloor\frac{p}{2}\rfloor+2$ is the smallest value of $q$ for which $f(n,p,q)=\Omega(n^2)$. They also applied the Lov\'asz Local Lemma~\cite{EL} to obtain what is still the best known upper bound on $f(n,p,q)$ for general $p$ and $q$,
\begin{equation}\label{eq::lll}
f(n,p,q)=O\left(n^{\frac{p-2}{\binom{p}{2}-q+1}}\right).
\end{equation}
Additionally, a simple inductive argument was given in~\cite{EG} to prove a lower bound in the diagonal case, i.e. when $p=q$,
\[
f(n,p,p)\geq n^{\frac{1}{p-2}}-1.
\]

Since then, significant progress has been made towards understanding the behavior of this $(p,q)$-coloring function. 
S\'ark\"ozy and Selkow \cite{SS1,SS2} explored the behavior of $f(n,p,q)$ for values of $q$ between the thresholds given in~\cite{EG}. In particular, they proved that there are at most $\log p$ values of $q$ for which $f(n,p,q)$ is linear, and showed that $f(n,p,q)=\binom{n}{2}-o(n^2)$ for all $q>\binom{p}{2}-\lfloor\frac{p}{2}\rfloor+2+\lceil\frac{\log_2p}{2}\rceil$. 

In addition, Conlon, Fox, Lee, and Sudakov \cite{CFLS} generalized constructions of Mubayi and Eichorn~\cite{EM,M1} to prove that $f(n,p,p-1)=n^{o(1)}$ for all $p$, settling the question posed in~\cite{EG} of whether $q = p$ is the smallest value for which $f(n, p, q)$ is polynomial in $n$. The probabilistic upper bound on $f(n,p,p)$ has also been improved for several small values of $p$: Mubayi~\cite{M2} gave a construction for $p=4$, and  Cameron and Heath~\cite{CH,CH2} generalized this approach to improve the bounds for $p\in\{5,6,8\}$.

However, there are many values of $(p,q)$ for which very little is known about $f(n,p,q)$.  Recently, Pohoata and Sheffer~\cite{PS} introduced the color energy of a graph, which they used to obtain lower bounds on the generalized Ramsey numbers for a new family of values of $(p,q)$. 

Given a graph $G=(V,E)$, a set of colors $C$, and an edge-coloring $\chi:E\rightarrow C$, the \emph{color energy} of $G$ is 
\[
\E(G)=|\{(v_1,v_2,v_3,v_4)\in V^4:\chi(v_1,v_2)=\chi(v_3,v_4)\}|.
\]
By bounding the color energy of $K_n$ under a $(k,\binom{k}{2}-m\cdot\left\lfloor\frac{k}{m+1}\right\rfloor+m+1)$-coloring, Pohoata and Sheffer~\cite{PS} proved the following new family of bounds:

\begin{thm}[\cite{PS}]\label{thm::PS}
For any integers $k>m\geq 2$, 
\[
f\left(n,k,\binom{k}{2}-m\cdot\left\lfloor\frac{k}{m+1}\right\rfloor+m+1\right)=\Omega\left(n^{1+\frac{1}{m}}\right).
\]
\end{thm}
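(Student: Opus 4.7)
The plan is to bound the color energy $\E(K_n)$ from above and below and then conclude. Suppose $\chi : E(K_n) \to [C]$ is a $(k,q)$-coloring with the stated value of $q$, and write $m_c = |\chi^{-1}(c)|$. Cauchy-Schwarz applied to $\sum_c m_c = \binom{n}{2}$ gives immediately
\[
\E(K_n) \;=\; 4\sum_c m_c^2 \;\geq\; \frac{4\binom{n}{2}^2}{C} \;=\; \Omega\!\left(\frac{n^4}{C}\right),
\]
so the remaining task is to produce a matching upper bound on $\E(K_n)$ that uses the $(k,q)$-coloring property; combining the two will force $C = \Omega(n^{1+1/m})$.

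Following the Pohoata-Sheffer philosophy, I would next build an auxiliary \emph{color energy graph} $H$ on $V(K_n)$. For $u \neq v$, set $N(u,v) = |\{w : \chi(uw) = \chi(vw)\}|$ and define $H_\tau$ by $uv \in E(H_\tau) \iff N(u,v) \geq \tau$, where $\tau$ is a threshold optimized at the end. A vertex-by-vertex convexity argument (applying Cauchy-Schwarz to $\sum_c d_c(w) = n-1$ at each $w$, and then accounting for monochromatic disjoint edge pairs separately when $C$ exceeds $n$) allows one to extract from $\E(K_n)$ a lower bound of the shape $|E(H_\tau)|\cdot\tau = \Omega(n^3/C)$ (up to logarithmic factors absorbed by a dyadic pigeonhole over $\tau$).

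The heart of the argument is to identify a fixed graph $F$ whose presence in $H_\tau$ would contradict the $(k, q)$-coloring property. The specific value $q = \binom{k}{2} - m\lfloor k/(m+1)\rfloor + m+1$ is calibrated exactly so that $k$ vertices partitioned into $s = \lfloor k/(m+1)\rfloor$ blocks of $m+1$ vertices, with $m$ color-repetitions per block, use only $q-1$ distinct colors. Taking $F$ to be a bipartite skeleton encoding this block structure (a natural candidate being $F = K_{m,m}$, possibly refined to a subdivided or theta-type variant tailored to the $s$ blocks), I would argue that a copy of $F$ in $H_\tau$ lets us select pairwise-disjoint ``witness'' vertices $w$ --- one per edge of $F$, drawn from the $\geq \tau$ available color-coincidence witnesses by a greedy union-bound (feasible since $F$ has constant size while $\tau \to \infty$ with $n$) --- and thereby assemble a $k$-clique realizing the forbidden block structure. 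By K\H{o}v\'ari-S\'os-Tur\'an,
\[
|E(H_\tau)| \;\leq\; \ex(n, K_{m,m}) \;=\; O\!\left(n^{2-1/m}\right).
\]

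Combining this upper bound with $|E(H_\tau)|\cdot\tau = \Omega(n^3/C)$ and choosing $\tau$ to balance the two sides yields $C = \Omega(n^{1+1/m})$. The principal obstacle is the forbidden-subgraph step: pinning down the exact graph $F$ whose $H_\tau$-freeness genuinely follows from the $(k, q)$-coloring constraint with the particular $q$ in the statement, and verifying that the witness-vertex selection can be carried out disjointly across all edges of $F$ so that the color savings truly aggregate to $m(s-1)+1$ repetitions. Once $F$ is identified and the disjointness is checked, the rest reduces to a routine dyadic optimization against the K\H{o}v\'ari-S\'os-Tur\'an extremal bound.
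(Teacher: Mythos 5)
Your auxiliary graph $H_\tau$ on $V(K_n)$ cannot drive this bound, because it does not see the part of $\E(K_n)$ that matters. Writing the energy as contributions from ordered same-colored edge pairs, the pairs sharing a vertex (the ``cherries'' your $N(u,v)$ counts) satisfy a \emph{deterministic} bound $\sum_{u\neq v}N(u,v)=\sum_{w}\sum_{c}d_c(w)\bigl(d_c(w)-1\bigr)=O(n^2)$, since the $(k,q)$-coloring here forbids monochromatic $K_{1,k-1}$'s and hence $d_c(w)\leq k-2$ for every $w,c$. Meanwhile, in the regime you must rule out, $|C|\approx n^{1+1/m}$, so $\E(K_n)\geq 4\binom{n}{2}^2/|C|=\Theta(n^{3-1/m})\gg n^2$; essentially all of the energy sits in \emph{disjoint} same-colored pairs, which $H_\tau$ is blind to. Indeed $H_\tau$ can be completely empty (take every color class to be a matching) while $\E(K_n)\approx n^3$. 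So the asserted inequality $|E(H_\tau)|\cdot\tau=\Omega(n^3/C)$ fails whenever $|C|>n$, and even restricted to $|C|\leq n$ the bookkeeping does not close: $|E(H_\tau)|\leq O(n^{2-1/m})$ and $|E(H_\tau)|\cdot\tau\gtrsim n^3/|C|$ only give $\tau\gtrsim n^{1+1/m}/|C|$, which against the trivial $\tau\leq n$ yields merely $|C|=\Omega(n^{1/m})$, far short of $\Omega(n^{1+1/m})$.

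The argument that works, which the paper isolates as Lemma~\ref{lem::turan} and identifies as the one implicit in~\cite{PS}, applies KST to a \emph{different} auxiliary graph: the bipartite \emph{color incidence graph} between $V$ and $C$, where $v\sim c$ iff $v$ is incident to a sufficiently popular edge of color $c$. Bounded color degree guarantees this graph has $\Theta(n^2)$ edges \emph{no matter how large $|C|$ is}, so KST is applied to a graph that is always dense. One forbids $K_{s,m}$ with $s=\floor{k/(m+1)}$ on the $V$-side and $m$ on the $C$-side: such a copy yields $s$ stars, one at each chosen vertex, each with $m$ edges carrying the same fixed set of $m$ colors, hence a clique on at most $s(m+1)\leq k$ vertices with at least $m(s-1)>\binom{k}{2}-q$ repetitions. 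The asymmetric Zarankiewicz bound $z(n,|C|;s,m)=O\bigl(|C|\,n^{1-1/m}\bigr)$ then forces $|C|=\Omega(n^{1+1/m})$ directly, with no dyadic threshold to tune. Your candidate $F=K_{m,m}$ also misses the dependence on $k$ through $s=\floor{k/(m+1)}$: the correct $F$ is $K_{s,m}$, and it must live in the $V$-versus-$C$ incidence graph rather than in a graph on $V$ alone. (The paper also records a second route, via~\eqref{eq::CT} and the Conlon--Tyomkyn bound $f_r(n,T)=\Omega(n^{1+1/|E(T)|})$ for trees $T$.)
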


In \cite{FPS}, Fish, Pohoata, and Sheffer further developed the color energy approach, defining higher color energies and color energy graphs as tools to prove additional families of bounds. Color energy graphs are the main focus of this paper, and the bounds on $f(n,p,q)$ which are proven using color energy graphs all fit into a general framework, which we describe here.

Let $F$ be a bipartite graph and $r\geq 2$, $\alpha>0$ be constants. We say $F$ is \emph{$(r,\alpha)$-nice} if  $\mathrm{ex}(n,F)=O(n^{2-\alpha})$ and
\begin{equation}\label{equation r alpha nice}
f\left(n,r|V(F)|,\binom{r|V(F)|}2-(r-1)|E(F)|+1\right)=\Omega\left(n^{\frac{\alpha r}{r-1}}\right).
\end{equation}

Furthermore, we say that $F$ is simply \emph{$r$-nice} if $F$ is $(r,\alpha)$-nice for every $\alpha>0$ such that $\mathrm{ex}(n,F)=O(n^{2-\alpha})$. Intuitively, a graph $F$ is $r$-nice if any coloring of $K_n$ with significantly fewer than $n^{\frac{\alpha r}{r-1}}$ colors has a clique of size $r|V(F)|$ spanning as few colors as we would expect if there were $r$ pairwise vertex-disjoint copies of $F$ each with the same coloring.

In \cite{FPS}, the authors used the color energy graph to prove that even cycles $C_{2k}$ are $2$-nice, which led them to find new bounds on $f(n,p,q)$ based on known upper bounds on $\mathrm{ex}(n,C_{2k})$. They also showed that $C_8$ is $3$-nice, but did not prove that any infinite families of graphs are $r$-nice for any $r\geq 3$. We extend the techniques used in \cite{FPS} to find many families of $(r,\alpha)$-nice graphs. For $t\geq 3$, denote by $K_t^+$  the subdivision of $K_t$ and by $\varTheta(a,b)$ the \emph{theta graph}, which consists of $b$ internally-disjoint paths of $a$ edges each with the same two endpoints. Furthermore, for $a\leq b$, let $K_{a,b}^{\ell}$ denote the graph obtained from $K_{a,b}$ by replacing each edge with a path of length $\ell$ (that is, $K_{a,b}^1=K_{a,b}$).
\begin{theorem}\label{thm::nice}
We have the following:
\begin{enumerate}
    \item[(i)] \makebox[12.8cm]{$K_t^+$ is $2$-nice for all $t\geq 3$.\hfill}(Theorem~\ref{thm::subKt}) 
    \item[(ii)] \makebox[12.8cm]{$\varTheta(a,b)$ is $(r,1-\frac{1}{a})$-nice for all $r,b\geq 2$, $a> r,$ and $r$-nice when $b=2$\hfill}
    \vspace{-.15cm}\item[] \makebox[12.8cm]{or $b$ is sufficiently large compared to $a$.\hfill}(Theorem~\ref{thm::theta})
    \item[(iii)] \makebox[12.8cm]{$K_{3,b}^\ell$ is $(r,1-\frac{2}{3\ell})$-nice for $b\geq 3$, $\ell\geq 2$,  $r\in \{2,3,4,5,6\}$ with $r<\frac{3}2\ell$.\hfill}(Theorem~\ref{thm::subKtt})
\end{enumerate}
\end{theorem}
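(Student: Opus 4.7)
The plan is to prove each of the three parts as a separate theorem (Theorems~\ref{thm::subKt}, \ref{thm::theta}, and \ref{thm::subKtt}), all obtained from a common meta-argument built on the color energy graph of \cite{PS,FPS}, instantiated with a Turán bound tailored to the family $F$ in question.

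\textbf{The meta-argument.} Suppose $F$ is a bipartite graph with $\mathrm{ex}(n,F)=O(n^{2-\alpha})$, set $p=r|V(F)|$ and $q=\binom{p}{2}-(r-1)|E(F)|+1$, and assume for contradiction that $K_n$ has a $(p,q)$-coloring $\chi$ with $C=o(n^{\alpha r/(r-1)})$ colors. I would form the $r$-color energy graph $\mathcal{G}_r$ whose vertex set consists of ordered $r$-tuples of distinct vertices of $K_n$, placing an edge between $(u_1,\dots,u_r)$ and $(v_1,\dots,v_r)$ whenever $\chi(u_1v_1)=\cdots=\chi(u_rv_r)$. A direct count of pairs of $r$-tuples realizing a fixed color, followed by the power-mean inequality on the color-class sizes $m_c$, gives
\[
|E(\mathcal{G}_r)| \;\gtrsim\; \sum_c m_c^r \;\geq\; \frac{1}{C^{r-1}}\left(\sum_c m_c\right)^{r} \;=\; \Omega\!\left(\frac{n^{2r}}{C^{r-1}}\right) \;=\; \omega\!\left(|V(\mathcal{G}_r)|^{2-\alpha}\right).
\]
The Turán bound for $F$ then forces an embedding of $F$ into $\mathcal{G}_r$, which unwinds to $r$ vertex-disjoint copies of $F$ in $K_n$ sharing a single color pattern. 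Their vertex union is a $p$-clique using at most $\binom{p}{2}-(r-1)|E(F)|$ colors, contradicting the $(p,q)$-coloring hypothesis.

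\textbf{Ingredients per family.} For (i), I would plug in any power-saving improvement over the trivial Kővári--Sós--Turán-style bound for $\mathrm{ex}(n,K_t^+)$ and run the meta-argument at $r=2$. For (ii), the Faudree--Simonovits theorem $\mathrm{ex}(n,\varTheta(a,b))=O_b(n^{1+1/a})$ supplies $\alpha=1-1/a$; the hypothesis $a>r$ is precisely what keeps the subsequent cleanup and exponent arithmetic admissible, and sharper Turán results in the regimes $b=2$ and $b$ sufficiently large upgrade the conclusion to full $r$-niceness by allowing $\alpha$ to be pushed closer to $1$. For (iii), I would invoke the analogous Turán bound for the subdivided graph $K_{3,b}^\ell$ of the form $O(n^{1+2/(3\ell)})$, with the hypothesis $r<\tfrac{3}{2}\ell$ playing the same role as $a>r$ did in (ii); the cap $r\le 6$ reflects the cases in which this Turán bound is currently known.

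\textbf{Main obstacle.} The delicate step is extracting \emph{vertex-disjoint} copies of $F$ from an embedding of $F$ into $\mathcal{G}_r$, since the $r|V(F)|$ coordinate values involved need not be distinct across the tuples. I would handle collisions by showing that the edges of $\mathcal{G}_r$ in which some pair of coordinates coincide form a strictly sparser sub-multigraph (since each forced coincidence cuts down the ambient count by a factor of~$n$), deleting them, and reapplying the Turán bound to what remains. The room needed for this deletion to still leave $\omega(|V(\mathcal{G}_r)|^{2-\alpha})$ edges is exactly what forces $a>r$ in (ii) and $r<\tfrac{3}{2}\ell$ in (iii); pinning down the constants in this cleanup, and checking that the resulting embeddings are honestly vertex-disjoint rather than merely generically so, is where I expect the technical bookkeeping to be heaviest.
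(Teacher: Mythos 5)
Your high-level framework — color energy graph, Hölder/power-mean to relate $|\vec{E}|$ to $|E|$ and $|C|$, Turán bound to force an embedding of $F$ into $\vec{G}$, then contradiction with the $(p,q)$-coloring hypothesis — matches the paper's skeleton precisely, and you correctly identify the crux: a copy of $F$ in $\vec{G}$ does not automatically project to $r$ vertex-disjoint color-isomorphic copies of $F$ in $K_n$. But your proposed resolution of that crux does not work, and the gap is structural, not bookkeeping.

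Deleting edges of $\vec{G}$ "in which some pair of coordinates coincide" only controls degeneracies \emph{within a single edge} of $\vec{G}$. The actual problem occurs at the level of a whole copy of $F$ in $\vec{G}$: even if every individual edge of that copy is clean, distinct vertices $\vec{u},\vec{v}$ of the copy that are far apart in $F$ may share coordinates, so the projected images $\pi_k(\vec{F})$ can be proper homomorphic images with strictly fewer edges than $|E(F)|$, yielding fewer color repetitions than the $(r-1)|E(F)|$ the contradiction needs. No amount of edge-deletion in $\vec{G}$ prevents this, because the offending objects are subgraphs spanning many edges, not single edges; you cannot discard them in advance by an edge-density argument. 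This is exactly why the paper only needs the simple route for $K_t^+$ (where every $P_3$-preserving homomorphic image of $K_t^+$ has the same edge count, so degeneracy is harmless) but must do much more for $\varTheta(a,b)$ and $K_{3,b}^\ell$.

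What the paper actually does for (ii) and (iii) is qualitatively different: it finds a much \emph{larger} structure in $\vec{G}$ (e.g.\ $\varTheta(a,2ra^2b)$ or $K_{3,30rb\ell^2}^\ell$), designates part of it as a ``reservoir,'' and runs a graph-revealing algorithm that tracks ``savings'' and ``delayed vertices'' step by step; when the projected images are degenerate and short on repetitions, the reservoir (Lemma~\ref{lem::reservoir} and Theorem~\ref{thm::homo with right reps}) is used to recoup them while keeping the vertex count at $r|V(F)|$. For $K_{3,b}^\ell$ there is an additional adaptive choice — page-by-page, either keeping a whole page or just its first path depending on observed savings — backed by the case analysis in Lemma~\ref{lem::extraordinary}, which is where the restrictions $r\leq 6$ and $r<\frac32\ell$ genuinely come from. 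Your explanation of $a>r$ and $r<\frac32\ell$ as ``room for cleanup'' is also not what is happening: $a>r$ is exactly the threshold at which the exponent $\frac{r}{r-1}(1-\frac1a)$ exceeds $1$, which is needed both for nontriviality and so that the coloring is above the linear threshold required for Proposition~\ref{prop::pruned exists} (pruned energy graph) to apply. In short, the missing idea is the savings/reservoir machinery of Section~\ref{sec::homo}; the proposal's degenerate-edge-deletion would not close the argument for parts (ii) and (iii).
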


Each of these families gives us new lower bounds on $f(n,p,q)$ for the appropriate choices of $p$ and $q$, many of which improve on existing bounds in the literature. For a detailed comparison of our bounds to previous results, see Section~\ref{sec::intro:comparison}.

The fact that $\varTheta(a,2)=C_{2a}$ is $r$-nice for $a>r$ is particularly interesting due to the connection with the long-standing question about lower bounds for the extremal number $\ex(n,C_{2a})$. This extremal number is only known up to a multiplicative constant factor when $a\in \{2,3,5\}$. If one could show that
\begin{equation}\label{equation cycle contrapositive bound}
f\left(n,r|V(C_{2a})|,\binom{r|V(C_{2a})|}2-(r-1)|E(C_{2a})|+1\right)=O\left(n^{\frac{\alpha r}{r-1}}\right),
\end{equation}
for some $\alpha>0$ and $a>r$, then Theorem~\ref{thm::theta} would imply that $\ex(n,C_{2a})=\Omega(n^{2-\alpha})$. In particular, proving \eqref{equation cycle contrapositive bound} for $\alpha=1-\frac{1}a$ would give a lower bound on $\ex(n,C_{2a})$ matching the upper bound up to a multiplicative constant factor. It is unclear, however, if determining the value of $f(n,p,q)$ for this specific choice of parameters would be easier than proving the extremal number for even cycles directly.

It is worth noting that Theorem~\ref{thm::PS} applied with $m=r-1$, and $k=rt$ for some $t$ implies that trees $T$ with $|V(T)|=t$ are $r$-nice, as it is well-known that $\mathrm{ex}(n,T)=\Theta(n)$.  

Using a simple induction argument employed by Erd\H{o}s and Gy\'arf\'as~\cite{EG} in their original paper, we improve a result of Fish, Pohoata, and Sheffer~\cite{FPS} and obtain a generalization of the lower bound from~\cite{EG} on $f(n,p,p)$.

\begin{thm}\label{thm::induction}
For every $1 \leq m \leq k-1$, we have
\[ 
f\left( n, k, \binom{k}{2} - m(k-m) - \binom{m}{2} + m + 1 \right) = \Omega\left(n^{1/m}\right).
\]
\end{thm}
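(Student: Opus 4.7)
The plan is a straightforward induction on $m$, mirroring the classical Erdős-Gyárfás argument. I begin with the arithmetic simplification
\[
\binom{k}{2} - m(k-m) - \binom{m}{2} = \binom{k-m}{2},
\]
so that the target bound becomes $f(n, k, \binom{k-m}{2} + m + 1) = \Omega(n^{1/m})$. The base case $m=1$ amounts to $f(n, k, \binom{k-1}{2} + 2) = \Omega(n)$, which is precisely the Erdős-Gyárfás linearity threshold $q = \binom{k}{2} - k + 3$ recalled in the introduction.

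For the inductive step ($m \ge 2$), consider a $(k, \binom{k-m}{2} + m + 1)$-coloring of $K_n$ using $C$ colors. The standard neighborhood trick goes as follows: fix any vertex $v$, and by pigeonhole locate a color $c$ that appears on at least $(n-1)/C$ edges at $v$; let $N_c$ be the corresponding set of neighbors. I claim that the induced coloring on $K_{N_c}$ is a $(k-1, \binom{k-m}{2} + m)$-coloring. Indeed, any $(k-1)$-subset $S \subseteq N_c$ together with $v$ spans a $k$-clique using at least $\binom{k-m}{2} + m + 1$ colors, while all $k-1$ edges from $v$ to $S$ carry color $c$, so the edges inside $S$ alone must account for at least $\binom{k-m}{2} + m$ distinct colors. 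The arithmetic identity
\[
\binom{(k-1)-(m-1)}{2} + (m-1) + 1 = \binom{k-m}{2} + m
\]
is exactly what is needed to apply the inductive hypothesis at the pair $(k-1, m-1)$, yielding $C = \Omega(|N_c|^{1/(m-1)})$; combining with $|N_c| \ge (n-1)/C$ gives $C^m = \Omega(n)$, completing the induction.

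Since the entire argument is one of parameter bookkeeping on top of a well-worn neighborhood-restriction trick, I anticipate no substantive obstacle. The only subtlety worth double-checking is that the transition $q \mapsto q-1$ induced by the pigeonhole step is correctly aligned with the simultaneous decrement of $k$ and $m$; once the simplification to $\binom{k-m}{2} + m + 1$ is made, this alignment is transparent, and the argument closes cleanly against the $m=1$ base case.
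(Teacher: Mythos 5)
Your proof is correct and takes essentially the same approach as the paper: the paper's direct iterative construction---picking $v_1,\dots,v_m$ and shrinking at each step to a large monochromatic color-neighborhood---is exactly the unrolled version of your induction on $m$, and the arithmetic simplification $\binom{k}{2}-m(k-m)-\binom{m}{2}=\binom{k-m}{2}$ that you lead with makes the parameter decrement transparent. The one spot worth tightening is the boundary $m=k-1$: your chain $(k,k-1)\to(k-1,k-2)\to\cdots$ bottoms out at $(2,1)$, where the induced coloring on $K_{N_c}$ would have to be a $(2,2)$-coloring, which is degenerate since $2>\binom{2}{2}$. This is not a real obstruction---that constraint simply forces $|N_c|\le 1$, so the pigeonhole bound already gives $C\ge n-1$, far stronger than needed, or equivalently you can install $(3,2)$ as an additional base case by noting that a $(3,3)$-coloring is a proper edge-coloring and hence uses at least $n-1$ colors---but it should be said explicitly, since otherwise the cited Erd\H{o}s--Gy\'arf\'as linearity threshold does not literally cover $k'=2$. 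The paper's unrolled formulation sidesteps the issue because it never recurses to a sub-instance; it just takes the surviving $k-m\ge 1$ vertices at the end and counts colors directly.
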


Moving forward, when considering $f(n,p,q)$, instead of viewing the problem as requiring at least $q$ colors on every $p$-clique, it is often helpful to think about the problem in terms of having at most $\binom{p}2-q$ repetitions among existing colors. More formally, if $K_p$ is an edge-colored clique and $C$ is the set of all colors that appear on edges of $K_p$, we say that the clique has $\binom{p}2-|C|$ \emph{color repetitions} or just \emph{repetitions}. For accounting purposes, it will often be useful to count repetitions in groups. For example, if we consider a set of $k$ edges of the same color $c$ in a $p$-clique, we will count this as $k-1$ repetitions, unless we have already discovered a different edge of color $c$, in which case we will consider these $k$ edges to have yielded $k$ repetitions.

\subsection{Relation to the Conlon-Tyomkyn Problem}\label{sec::intro:conlontyomkyn}

Motivated by the treatment of the Erd\H{o}s-Gy\'arf\'as problem in~\cite{K}, Conlon and Tyomkyn~\cite{CT} asked how many colors are necessary in a proper edge-coloring of $K_n$ without many vertex-disjoint color-isomorphic copies of some fixed small graph $H$. (We say edge-colored graphs are color-isomorphic if there is an isomorphism between them preserving the colors.) More precisely, for $n, k \geq 2$ and a graph $H$, we denote by $f_k(n, H)$ the smallest integer $C$ such that there is a proper edge-coloring of $K_n$ with $C$ colors containing no $k$ disjoint color-isomorphic copies of $H$. They considered only proper colorings to avoid some trivial obstacles, but the connection to determining $f(n,p,q)$ can be made explicit. 

Fix $n,k\geq 2$ and a graph $H$ such that  $(k-1)|E(H)|\leq k|V(H)|-2$. Let $p=k|V(H)|$ and $q=\binom{k|V(H)|}{2}-(k-1)|E(H)|+1$, and consider a $(p,q)$-coloring of $K_n$. While this coloring is not necessarily proper, it forbids monochromatic stars on $p$ vertices, since such a star would contain $k|V(H)|-2$ repetitions, which is more than allowed by the $(p,q)$-coloring.  Since each vertex is incident to a bounded number of edges of each color, we can obtain a proper coloring by expanding our set of colors by a constant factor. This new coloring cannot contain $k$ disjoint color-isomorphic copies of $H$, otherwise we can find a $p$-clique in the original coloring with fewer than $q$ colors. Therefore, our $(p,q)$-coloring must use $\Omega(f_k(n,H))$ colors, and hence, we have
\begin{equation}\label{eq::CT}
f\left(n,k|V(H)|,\binom{k|V(H)|}{2} - (k-1)|E(H)| + 1\right) = \Omega(f_k(n,H)).
\end{equation}

By exploiting this relationship between the two problems, we can obtain bounds on the Erd\H{o}s-Gy\'arf\'as function $f(n,p,q)$ using known results about $f_k(n,H)$. For example, Conlon and Tyomkyn~\cite{CT} gave a short proof that for every integer $k$ and tree $T$ with $m$ edges, $f_k(n,T)=\Omega(n^{1+1/m})$, which allows us to recover Theorem~\ref{thm::PS} without the need to invoke color energy. More recently, Xu and Ge~\cite{GX} showed that for $t\geq3$, $f_2(n, K_t^+) = \Omega(n^{1+\frac{1}{2t-3}})$. Applying \eqref{eq::CT} gives a result which matches Theorem~\ref{thm::subKt} due to our current knowledge of the extremal number of $K_t^+$. 
Among other impressive results, Janzer~\cite{J1} showed that for fixed integers $k,r\geq 2$, 
\[
f_r(n,C_{2k})=\Omega\left(n^{\frac{r}{r-1}\cdot\frac{k-1}{k}}\right).
\]
His proof can be extended in a straightforward manner to show
\[
f_r(n,\varTheta(a,b))=\Omega\left(n^{\frac{r}{r-1}\cdot\frac{a-1}{a}}\right),
\]
which matches our result in Theorem~\ref{thm::theta}.

\subsection{Comparison with Previous Results}\label{sec::intro:comparison}

In addition to giving non-trivial lower bounds on $f(n,p,q)$ for new families of pairs $(p,q)$, our work improves existing bounds for previously studied families of pairs $(p,q)$. To see how our theorems improve existing bounds, note that $f(n,p,q)\leq f(n,p,q')$ for $q\leq q'$. By fixing a number of vertices $p$ and a total number of colors, we can compare results by considering the number of repetitions that we can guarantee on each $p$-clique. 

Setting $m=2t-3$ and $k=2s$ in Theorem~\ref{thm::PS} \cite{PS} gives
\[
f\left(n,2s,\binom{2s}{2}-\left(t^2-\frac{5}{2}t+\frac{3}{2}\right)+1\right)=\Omega
\left(n^{1+\frac{1}{2t-3}}\right), 
\]
which we improve in Theorem~\ref{thm::subKt}, showing 
\[
f\left(n,2s,\binom{2s}{2}-(t^2-t)+1\right)=\Omega
\left(n^{1+\frac{1}{2t-3}}\right). 
\]
 
We can perform a similar comparison between Theorems~\ref{thm::subKtt} and~\ref{thm::PS} by considering the case when the number of colors $n^{1+1/m}$ satisfies  
\[
m=\frac{3(r-1)\ell}{3\ell-2r}
\]
for some $r<\frac{3}{2}\ell$. 
In this case, when $r\leq \ell$, our theorem gives the same number of repetitions on $k=r(3+(3\ell-2)b)$ vertices as Theorem~\ref{thm::PS}. However, when $r>\ell$, Theorem~\ref{thm::PS} gives 
\[
f\left(n,k,\binom{k}{2}-3(r-1)\ell b+m+1\right)=\Omega\left(n^{1+1/m}\right)
\] while our Theorem~\ref{thm::subKtt} improves this to
\[
f\left(n,k,\binom{k}{2}-3(r-1)\ell b +1\right)=\Omega\left(n^{1+1/m}\right).
\] 

Similarly, Theorem~\ref{thm::induction} improves the following result of Fish, Pohoata, and Sheffer~\cite{FPS}, proved using an application of the well-known K\H{o}v\'ari-S\'os-Tur\'an Theorem~\cite{KST}.  

\begin{thm}[\cite{FPS}]\label{thm::FPS no color energy}
For any integers $2\leq m\leq k/2$, 
    \[
    f\left(n, k, \binom{k}{2}-m(k-m)+2\right)=\Omega\left(n^{1/m}\right).
    \]
\end{thm}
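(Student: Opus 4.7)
The plan is to apply the K\H{o}v\'ari-S\'os-Tur\'an Theorem directly to the color classes of a hypothetical $(k,q)$-coloring, where $q := \binom{k}{2} - m(k-m) + 2$. The intuition is that if the total number of colors is too small, then by pigeonhole some color class must be dense enough to contain a complete bipartite subgraph $K_{m,k-m}$, and the $k$-clique on its vertex set would then carry too many repetitions.

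I would argue by contradiction, assuming that $K_n$ admits a $(k,q)$-coloring $\chi$ using $C < c\, n^{1/m}$ colors, where $c > 0$ is a small constant chosen in terms of $k$ and $m$. By averaging, some color class $G \subseteq E(K_n)$ must have at least $\binom{n}{2}/C > \tfrac{1}{2c}\, n^{2 - 1/m}$ edges. Since $m \leq k/2$, we have $m \leq k - m$, so the K\H{o}v\'ari-S\'os-Tur\'an Theorem gives $\mathrm{ex}(n, K_{m,k-m}) = O(n^{2 - 1/m})$. Taking $c$ sufficiently small forces $|E(G)|$ to exceed this threshold for all sufficiently large $n$, so $G$ contains a copy of $K_{m,k-m}$ with parts $A, B$ of sizes $m$ and $k-m$, respectively.

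The $k$-clique on $V := A \cup B$ would then carry too many color repetitions: the $m(k-m)$ edges of $K_{A,B}$ all share a single color, contributing $m(k-m) - 1$ repetitions, and the edges inside $A$ and inside $B$ can only add further repetitions. Thus the clique on $V$ spans at most $\binom{k}{2} - m(k-m) + 1 < q$ colors, contradicting the $(k,q)$-coloring property of $\chi$ and yielding $f(n,k,q) = \Omega(n^{1/m})$.

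The only delicate point is the hypothesis $m \leq k/2$, which ensures that the smaller side of the target $K_{m,k-m}$ has size $m$ so that K\H{o}v\'ari-S\'os-Tur\'an produces the extremal exponent $2 - 1/m$ matching the density we obtain from averaging. For $m > k/2$ one would instead face the weaker bound $\mathrm{ex}(n, K_{m,k-m}) = O(n^{2 - 1/(k-m)})$, which is not strong enough for this direct approach; this obstruction is precisely what motivates the separate inductive argument used to prove the sharper Theorem~\ref{thm::induction}.
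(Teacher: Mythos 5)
Your proof is correct and takes essentially the same route the paper attributes to Fish, Pohoata, and Sheffer: pigeonhole a dense color class, apply K\H{o}v\'ari--S\'os--Tur\'an (using $m \leq k/2$ so the smaller part of $K_{m,k-m}$ has size $m$) to extract a monochromatic $K_{m,k-m}$, and observe the resulting $k$-clique carries at least $m(k-m)-1$ repetitions, one too many. The paper itself only cites this theorem from~\cite{FPS} and describes it as ``an application of the well-known K\H{o}v\'ari--S\'os--Tur\'an Theorem,'' which is exactly your argument.
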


For comparison, in Theorem~\ref{thm::induction}, we obtain the same bound for a smaller number of colors on each clique:  
\[
f\left( n, k, \binom{k}{2} - m(k-m) - \binom{m}{2} + m + 1 \right) = \Omega\left(n^{1/m}\right).
\]

\begin{proof}[Proof of Theorem~\ref{thm::induction}]
Suppose we color $K_n$ with $c:=n^{1/m}/(2(k-m)^{1/m})=\Theta(n^{1/m})$ colors. Arbitrarily choose a vertex $v_1$, and note that there exists a color such that $v_1$ is incident with at least 
\[
\frac{n-1}{n^{1/m}/(2(k-m)^{1/m})}>(k-m)^{1/m}n^{1-1/m}
\]
edges of this color, say color 1. Restricting to the neighborhood of $v_1$ in color 1, arbitrarily choose a vertex $v_2$, and we can find a color such that $v_2$ is incident with at least 
\[
\frac{(k-m)^{1/m}n^{1-1/m}-1}{n^{1/m}/(2(k-m)^{1/m})}>(k-m)^{2/m}n^{1-2/m}
\]
edges of this color, say color 2, all of whose endpoints are in the color 1 neighborhood of $v_1$. Continue iteratively, until we have selected vertices $v_1, v_2,\dots, v_m$ and colors $1,2,\dots,m$ such that there at least
\[
k-m=(k-m)^{m/m}n^{1-m/m}
\]
vertices simultaneously in the $i$-th color-neighborhood of $v_i$ for all $1\leq i\leq m$. Then we have a set of $k$ vertices that spans at most \[
m+\binom{k-m}2=\binom{k}2-m(k-m)-\binom{m}2+m
\]
colors. Thus, any coloring which hopes to have every clique span one more color needs at least $\Omega(c)=\Omega(n^{1/m})$ colors, completing the proof.
\end{proof}

Many of our results are essentially incomparable with previous results, but their quality can be judged with the local lemma bound in~\eqref{eq::lll}. This and Theorem~\ref{thm::theta} state that for every $r,b\geq 2$ and $a>r$, for $\ell = 2 + b(a-1)$, there exists $C,c>0$ such that for $n$ sufficiently large,
\[
c n^{\frac{r}{r-1} \cdot \frac{a-1}{a}} \leq f\left(n, r\ell, \binom{r\ell}{2} - (r-1)ab + 1\right) \leq C n^{\frac{r}{r-1} \cdot \frac{a-1}{a} + \frac{2}{ab}} .
\]
Note that as $b$ increases, the gap between the lower and upper bounds shrinks (although $C$ and $c$ implicitly depend on $b$).

\subsection{Organization}

The rest of the paper is organized as follows. The proofs of Theorems~\ref{thm::subKt}, \ref{thm::theta}, and \ref{thm::subKtt} increase in difficulty, so as we develop the concept of the color energy graph and associated tools, we prove these theorems when we have sufficient techniques to do so. In Section~\ref{sec::color energy}, we define the color energy graph and a helpful variant called the pruned color energy graph (Section~\ref{sec::color energy:pruned}). These tools are sufficient to provide a short proof of Theorem~\ref{thm::subKt} in Section~\ref{sec::color energy:subKt}. In Section~\ref{sec::homo}, we develop terminology and theory necessary for finding more complicated structures with the color energy graph. Following this development, we prove Theorem~\ref{thm::theta} in Section~\ref{sec::theta}. Finally, in Section~\ref{sec::subKtt}, we give the more involved proof of Theorem~\ref{thm::subKtt}. We provide some avenues for future research in Section~\ref{sec::conc}.

\section{Color Energy Graph}\label{sec::color energy}

In analogy to the additive energy of additive combinatorics, Pohoata and Sheffer \cite{PS} defined color energy of an edge-colored graph. With Fish \cite{FPS}, they went further in defining a corresponding graph, the ``color energy graph.'' As we use this graph extensively, we collect here its definition, some basic results, and a few helpful modifications to the color energy graph.

\begin{definition}\label{def::color energy}
Given a graph $G = (V,E)$ with a coloring $\chi: E \to C$, the \emph{$r$-th color energy graph} $\vec{G} = (\vec{V}, \vec{E})$ has vertex set $V^r$ with an edge between $(v_1, \dots, v_r)$ and $(u_1, \dots, u_r)$ if and only if $\chi(v_1u_1) = \ldots = \chi(v_ru_r)$.
\end{definition}

If it is clear from context, we will omit the $r$-th in the name and simply refer to $\vec{G}$ as the color energy graph. 
Note that $\vec{V}$ includes $r$-tuples with repeated coordinates, but $\vec{G}$ is loopless as $G$ is loopless.\footnote{Fish, Pohoata, and Sheffer~\cite{FPS} defined the $r$-th color energy graph as a subgraph of $\vec{G}$ obtained by removing certain edges such as loops and edges of ``unpopular'' colors. For clarity, we instead gather the necessary restrictions on $\vec{E}$ for our proofs in the definition of a pruned energy graph in Section~\ref{sec::color energy:pruned}.} 
Since an edge of the color energy graph corresponds to a multiset of $r$ edges of the same color in $G$, the coloring $\chi$ also naturally extends to a coloring on $\vec{G}$. The following relation between $|E|$, $|\vec{E}|$, and $|C|$ allowed the authors in~\cite{FPS} to derive lower bounds on $f(n,p,q)$.

\begin{proposition}[\cite{FPS}]\label{prop::color energy}
If $G = (V,E)$ with coloring $\chi: E \to C$ has $r$-th color energy graph $\vec{G} = (\vec{V}, \vec{E})$, then
\[ 
|C| \geq \left(\frac{|E|^r}{|\vec{E}|}\right)^{\frac{1}{r-1}}.
\]
\end{proposition}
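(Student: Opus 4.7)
The plan is to apply a power-mean (Hölder-type) inequality to the color degree sequence after rewriting $|\vec{E}|$ as a sum over colors. For each color $c \in C$, let $e_c = |\chi^{-1}(c)|$ denote the number of $G$-edges of color $c$, so that $\sum_{c \in C} e_c = |E|$. The two ingredients are (i) a lower bound on the number of color-$c$ edges of $\vec{G}$ in terms of $e_c$, and (ii) convexity to pass from $\sum e_c$ to $\sum e_c^r$.

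For (i), I would count the edges of $\vec{G}$ receiving color $c$ directly. Such an edge is an unordered pair $\{(v_1,\dots,v_r),(u_1,\dots,u_r)\}$ with $\chi(v_iu_i) = c$ for all $i$; equivalently, it is determined by an ordered $r$-tuple of $G$-edges $(f_1,\dots,f_r)$ all of color $c$ (namely $f_i = \{v_i,u_i\}$) together with a choice, for each $i$, of which endpoint of $f_i$ is placed in the first tuple. There are $e_c^r$ such tuples of edges and $2^r$ endpoint assignments, and since $v_i \neq u_i$, each unordered edge of $\vec{G}$ is produced exactly twice by this procedure (the two productions differ by the global swap of the two $r$-tuples). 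Moreover, distinct tuples $(f_1,\dots,f_r)$ of $G$-edges yield disjoint families of $\vec{G}$-edges, because from an edge of $\vec{G}$ one can read off the list $(\{v_i,u_i\})_{i=1}^r$ uniquely. Thus the number of color-$c$ edges of $\vec{G}$ equals $2^{r-1}e_c^r$, and so
\[ |\vec{E}| \;\geq\; \sum_{c \in C} e_c^r. \]

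For (ii), I would apply Hölder's inequality (equivalently, Jensen applied to the convex map $x \mapsto x^r$) to bound
\[ |E|^r \;=\; \Bigl(\sum_{c \in C} e_c\Bigr)^{\!r} \;\leq\; |C|^{r-1} \sum_{c \in C} e_c^r. \]
Combining the two displays gives $|E|^r \leq |C|^{r-1}\,|\vec{E}|$, and rearranging yields the claimed inequality $|C| \geq (|E|^r/|\vec{E}|)^{1/(r-1)}$.

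There is no substantive obstacle here; the argument is a clean two-line combination of counting and convexity. The only point meriting care is the bookkeeping in step (i): one must confirm both that distinct ordered tuples $(f_1,\dots,f_r)$ of $G$-edges produce disjoint edge-classes in $\vec{G}$ and that within a fixed tuple the $2^r$ endpoint assignments group into pairs giving the same unordered edge of $\vec{G}$, neither of which causes issues because $G$ is loopless so $v_i \neq u_i$.
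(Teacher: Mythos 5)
Your proof is correct and takes essentially the same approach as the paper: express $|\vec{E}|$ color class by color class in terms of the counts $e_c$, then apply H\"older (equivalently, convexity of $x \mapsto x^r$) to the identity $\sum_c e_c = |E|$. The only difference is that you track the factor $2^{r-1}$ in the count of color-$c$ edges of $\vec{G}$ and then discard it via an inequality, whereas the paper simply asserts $\sum_c m_c^r = |\vec{E}|$ without the $2^{r-1}$; your accounting is the more careful of the two, and in either case only the lower bound $|\vec{E}| \geq \sum_c e_c^r$ is needed.
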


\begin{proof}
For each color $c \in C$, let $m_c$ be the number of edges of color $c$ in $G$. Observe that $\sum_{c\in C} m_c = |E|$ and $\sum_{c\in C} m_c^r = |\vec{E}|$. H\"older's inequality implies
\[ |\vec{E}| = \sum_{c \in C} m_c^r \geq \frac{\left(\sum_{c\in C} m_c\right)^r}{\left(\sum_{c\in C} 1\right)^{r-1}} = \frac{|E|^r}{|C|^{r-1}} .\]
\end{proof}

We fix some terminology here. Let $G$ and $H$ be graphs. A \emph{graph homomorphism} from $H$ to $G$ is a function $\phi: V(H) \to V(G)$ such that if $uv \in E(H)$ then $\phi(u)\phi(v) \in E(G)$. Such a function on $V(H)$ induces a function on $E(H)$ which we also call $\phi$. As a graph, $\phi(H)$ is called a \emph{homomorphic image} (or just \emph{image}) of $H$ in $G$. If $\phi$ is injective, then $\phi(H)$ is called an \emph{isomorphic copy} (or just \emph{copy}) of $H$ in $G$. If the edges of $H$ are colored and $\phi(H)$ inherits this coloring, then $\phi(H)$ is called a \emph{color-homomorphic image} (likewise \emph{color-isomorphic copy}) of $H$ in $G$.

All of the proofs using the color energy graph follow roughly the same format. We fix a graph $H$, start with an $(r|V(H)|,\binom{r|V(H)|}{2} - (r-1)|E(H)|+1)$-coloring $\chi$ of a complete graph $K_n = (V,E)$, and consider the $r$-th color energy graph $\vec{G} = (\vec{V}, \vec{E})$. Let $\pi_k: \vec{V} \to V$ be the \emph{$k$-th coordinate map} for $1 \leq k \leq r$. If a copy $\vec{H}$ of $H$ is found as a subgraph of $\vec{G}$, then $\pi_k(\vec{H})$ is a color-homomorphic image of $\vec{H}$ for each $k \in [r]$. If these $r$ color-homomorphic images of $\vec{H}$ are actually pairwise vertex-disjoint color-isomorphic copies of $\vec{H}$, then taken together, these copies span $r|V(H)|$ vertices and contain at least $(r-1)|E(H)|$ repetitions, contradicting that $\chi$ is an $(r|V(H)|,\binom{r|V(H)|}{2} - (r-1)|E(H)|+1)$-coloring. In this case, $\vec{H}$ is not a subgraph of $\vec{G}$, so we have an upper bound on $|\vec{E}|$ in terms of $|\vec{V}|$ via the extremal number of $H$. By Proposition~\ref{prop::color energy}, this gives a lower bound on the number of colors used by $\chi$, and hence a lower bound on $f(n,p,q)$.

Unfortunately, it is not guaranteed that the images $\pi_k(\vec{H})$ are disjoint color-isomorphic copies of $\vec{H}$. This lead the authors in~\cite{FPS} to ``prune'' the color energy graph so that, for particular $H$, the color-homomorphic images still contain a sufficient number of repetitions and vertices. We concisely describe this pruning in the next subsection, as we use this for our proofs utilizing the color energy.

\subsection{Pruned Color Energy Graph}\label{sec::color energy:pruned}

Let $K$ be a graph. We say that a homomorphism $\phi: V(H) \to V(G)$ (and the homomorphic image $\phi(H)$) is \emph{$K$-preserving} if every copy of $K$ in $H$ is mapped to a copy of $K$ in $G$ under $\phi$. We denote by $P_k$ the path on $k$ vertices.

We cannot guarantee that the existence of a graph $\vec{H} \subseteq \vec{G}$ will yield $k$ disjoint color-isomorphic copies of $\vec{H}$, but we can guarantee that these color-homomorphic images of $\vec{H}$ are disjoint, bipartite, and $P_3$-preserving.

\begin{definition}\label{def::pruned energy}
Let $G = (V,E)$ be a graph with coloring $\chi : E \to C$. A \emph{pruned $r$-th energy graph} is a subgraph $\vec{G}' = (\vec{V}', \vec{E}')$ of the $r$-th color energy graph $\vec{G} = (\vec{V}, \vec{E})$ with the following structure:
\begin{enumerate}
\item There is a partition $V = V_1 \cup \cdots \cup V_r$ such that $\vec{V}' = V_1 \times \cdots \times V_r$ and $\floor{|V|/r} \leq |V_i| \leq \ceil{|V|/r}$ for each $i$.
\item For every $i$, there exist partitions $V_i = V_i' \cup V_i''$ such that the $i$-th coordinate of every edge of $\vec{E}'$ has one endpoint in each of $V_i'$ and $V_i''$.
\item If $\vec{x}, \vec{y} \in \vec{V}'$ are at distance at most 2 in $\vec{G}'$, then $\vec{x}$ and $\vec{y}$ are not equal in any coordinate.
\end{enumerate}
\end{definition}

We now fix some notation. In general, the vertices of a color energy graph are denoted with a vector arrow above them, as in $\vec{v}$, to remind the reader that they are tuples of vertices of $G$. We speak of entries in the tuples as `coordinates.' We let $\pi_k : V(\vec{G}) \to V_k$ be the $k$-th coordinate map, which induces a map on the edges of $\vec{G}$ as well. We abuse notation and also call this edge map $\pi_k$. Note that by property 3, edges in the pruned energy graph $\vec{G}'$ are sent to edges in $G$ under $\pi_k$. Furthermore, we let $\pi:\vec{G}' \to G$ be defined by $\pi(\vec{H}) = \bigcup_k \pi_k(\vec{H})$. We often consider paths in $\vec{G}'$. By property 3, the image of a path under $\pi_k$ is a walk which may repeat edges and vertices, but will never repeat the same edge consecutively, that is, it will never `turn around.'

The utility of the pruned energy graph is that the additional conditions come at no cost to the growth rate of the number of edges, as long as we consider colorings in which every vertex is incident to a bounded number of edges of each color. To enforce this color degree condition, we will require all our colorings to be $(p,\binom{p}{2}-p+3)$-colorings, which forbids monochromatic stars on $p$ vertices. Unfortunately, this implies that we can only use a pruned color energy graph when we want to prove bounds involving a superlinear number of colors since $(p,\binom{p}{2}-p+3)$-colorings of $K_n$ require $\Omega(n)$ colors. The existence of a pruned energy graph was shown in \cite{FPS}, so we only give a sketch of the proof below.

\begin{proposition}[\cite{FPS}]\label{prop::pruned exists}
Let $G = (V,E)$ be a graph with a $(p,\binom{p}{2}-p+3)$-coloring $\chi: E \to C$, and let $\vec{G} = (\vec{V}, \vec{E})$ be the $r$-th color energy graph. Then, there exists a pruned $r$-th energy graph $\vec{G}' = (\vec{V}', \vec{E}')$ such that $|\vec{V}'| = \Theta(|\vec{V}|)$ and $|\vec{E}'| = \Theta(|\vec{E}|)$.
\end{proposition}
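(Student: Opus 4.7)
The plan is to build $\vec{G}'$ from $\vec{G}$ in three successive reductions, each of which preserves a constant fraction of edges (and, in the first step, of vertices). Step~1 realizes the product vertex structure of Property~1, Step~2 imposes the bipartite structure of Property~2, and Step~3 deletes the remaining ``bad'' edges to secure Property~3. Only Step~3 will use the coloring hypothesis in an essential way, via the color-degree bound it implies.

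For Step~1, I would take a uniformly random balanced partition $V = V_1 \sqcup \cdots \sqcup V_r$ (assigning each vertex to one of $r$ parts with probability $1/r$ and adjusting a few vertices to enforce the size constraints), and restrict $\vec{G}$ to the product $\vec{V}_1 := V_1 \times \cdots \times V_r$. For an edge $\vec{v}\vec{u} \in \vec{E}$ whose $2r$ coordinate entries are pairwise distinct vertices of $G$, the probability that $v_i, u_i \in V_i$ for every $i$ is exactly $r^{-2r}$; the color-degree bound recalled below ensures that such ``generic'' edges account for $\Theta(|\vec{E}|)$ of the total. For Step~2, I would independently bipartition each $V_i = V_i' \sqcup V_i''$ uniformly at random and retain only those edges whose two $i$-th coordinates lie on opposite sides of $V_i$ for every $i$; this survives with probability $2^{-r}$. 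A pleasant byproduct is that after Step~2 any two adjacent vertices automatically differ in every coordinate, so the distance-$1$ portion of Property~3 is already free.

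The main obstacle is Step~3: destroying all \emph{bad walks}, by which I mean length-$2$ walks $\vec{x} - \vec{z} - \vec{y}$ with $x_i = y_i$ for some $i$. The coloring hypothesis enters here: a $(p, \binom{p}{2} - p + 3)$-coloring forbids monochromatic $K_{1,p-1}$, so every color has degree at most $p-2$ at every vertex. The key observation is that if $x_i = y_i$, then the edges $z_i x_i$ and $z_i y_i$ of $G$ coincide, forcing the two color-energy edges $\vec{z}\vec{x}$ and $\vec{z}\vec{y}$ to carry the same color $c$; for each $j \neq i$, the vertex $x_j$ is then a $c$-neighbor of $z_j$ and so has at most $p-2$ choices. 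Summing over the $r$ choices of the shared coordinate $i$ shows that each edge of the Step-$2$ graph lies in at most $2 r (p-2)^{r-1}$ bad walks. The auxiliary conflict graph on those edges, with a conflict edge for every pair participating together in a bad walk, therefore has maximum degree $O_{p,r}(1)$, and a standard greedy argument extracts an independent set $\vec{E}'$ of size $\Omega(|\vec{E}|)$. Taking $\vec{V}' := \vec{V}_1$ together with this $\vec{E}'$ yields a pruned $r$-th energy graph with $|\vec{V}'| = \Theta(|\vec{V}|)$ and $|\vec{E}'| = \Theta(|\vec{E}|)$.
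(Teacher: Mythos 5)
Your plan follows the same three-stage outline as the paper's sketch: a random balanced $r$-partition of $V$ to realize the product vertex set (Property~1), independent random bipartitions inside each part (Property~2), and a pruning step for Property~3 that exploits the bounded color degree (at most $p-2$ per color per vertex, which is what the $(p,\binom{p}{2}-p+3)$-hypothesis provides). Your formalization of the third step, via a conflict graph of maximum degree at most $2r(p-2)^{r-1}$ together with a greedy independent set of edges, is clean and correct, and is a bit more careful than the paper's one-line ``keep one neighbor of $\vec{z}$ for each first coordinate $v$,'' which as literally written only prevents first-coordinate collisions; your version handles all $r$ coordinates symmetrically.

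The place to tighten is the Step~1 assertion that the color-degree bound forces ``generic'' edges (those whose $2r$ coordinate entries are pairwise distinct) to make up a $\Theta(1)$ fraction of $\vec{E}$. For a color class of size $m_c$, the degree bound gives that the generic fraction among the $\Theta(m_c^r)$ color-energy edges coming from that class is at least $1 - O(r^2p/m_c)$, which is only bounded away from zero once $m_c$ exceeds a threshold depending on $p$ and $r$; for very small classes the generic fraction can vanish entirely. If bounded-size color classes dominate $|\vec{E}| = \Theta(\sum_c m_c^r)$ --- for example a rainbow coloring of $K_n$, where every edge of $\vec{G}$ is non-generic --- the restriction to $V_1\times\cdots\times V_r$ destroys essentially all of $\vec{E}$, and the proposition as literally stated fails. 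This regime forces $|C|=\Omega(n^2)$ by H\"older and so is harmless in the paper's applications, and the paper's own proof sketch is equally silent on the point, but a complete write-up would need to dispose of this case explicitly (say, by discarding all colors with $m_c$ below a suitable constant before the random partition) rather than attributing the claim to the degree bound alone.
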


\begin{proof}[Proof Sketch]
A standard probabilistic argument shows that every graph has a bipartite subgraph with at least half as many edges as the original. To get the partition $V_1, \dots, V_r$, one can use a modification of this argument to ensure that a constant fraction of the edges of $\vec{G}$ have all coordinates with both endpoints in the same $V_i$. One achieves property 2 through a similar probabilistic argument.

For property 3, we need that every vertex is incident to at most $p-2$ edges of each color (which is true since $\chi$ is a $(p,\binom{p}{2}-p+3)$-coloring). Among the neighbors of a vertex $\vec{z}$ with first coordinate $v$, there are at most $p-2$ distinct second coordinates, otherwise $\pi_2(\vec{z})$ is incident to at least $p-1$ edges of color $\chi(\pi_1(\vec{z}) v)$. Thus, $\vec{z}$ has at most $(p-2)^{r-1}$ neighbors with first coordinate $v$. We keep one of them for each $\vec{z}$ and $v$, giving property 3 for vertices at distance 2. Since $G$ is loopless, property 3 also holds for vertices at distance 1. 
\end{proof}

\subsection{Subdivided Cliques}\label{sec::color energy:subKt}

In this section, we show that $K_t^+$ is $2$-nice. Using the bound 
\[
\ex(n,K_t^+)=O\left(n^{\frac{3}{2}-\frac{1}{4t-6}}\right)\]
given by Janzer~\cite{J3} for $t\geq 3$, we obtain the following result. 

\begin{theorem}\label{thm::subKt}
Let $t\geq 3$. Then $K_t^+$ is $2$-nice. Consequently, if
$s:=|V(K_t^+)|=t+\binom{t}2$ (note that $|E(K_t^+)|=2\binom{t}2)$, then we have
\[
f\left(n,2s,\binom{2s}{2}-2\binom{t}{2}+1\right)=\Omega\left(n^{1+\frac{1}{2t-3}}\right).
\]
\end{theorem}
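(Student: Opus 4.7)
The plan is to apply the color-energy-graph framework with $r=2$. Given a $(2s,\binom{2s}{2}-2\binom{t}{2}+1)$-coloring of $K_n$ using $N$ colors, one first checks that $2\binom{t}{2}-1\leq 2s-2$ for $t\geq 3$, so this is also a $(2s,\binom{2s}{2}-2s+3)$-coloring; then Proposition~\ref{prop::pruned exists} supplies a pruned second color energy graph $\vec{G}'$ with $|V(\vec{G}')|=\Theta(n^2)$ and $|E(\vec{G}')|=\Theta(|E(\vec{G})|)$. The core of the argument is showing that $\vec{G}'$ contains no copy of $K_t^+$. Given this, Janzer's bound $\ex(m,K_t^+)=O(m^{3/2-1/(4t-6)})$ yields $|E(\vec{G}')|=O(n^{3-1/(2t-3)})$, and Proposition~\ref{prop::color energy} gives $N=\Omega(n^4/n^{3-1/(2t-3)})=\Omega(n^{1+1/(2t-3)})$.

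Suppose $\vec{H}\subseteq\vec{G}'$ is a copy of $K_t^+$. By pruning property~1, the projections $\pi_1(\vec{H})$ and $\pi_2(\vec{H})$ lie in disjoint parts $V_1,V_2$, so $V^*:=\pi_1(V(\vec{H}))\sqcup\pi_2(V(\vec{H}))$ has at most $2s$ vertices; extending to any $2s$-superset $V^{**}$ never decreases the number of color repetitions on its clique, since every new edge adds at most one new color. Each color used on $\vec{H}$ appears in both (vertex-disjoint) images, so the combined edge set uses at most $|E(K_t^+)|=2\binom{t}{2}$ distinct colors, and the task reduces to showing $|\pi_1(E(\vec{H}))|+|\pi_2(E(\vec{H}))|\geq 4\binom{t}{2}$, that is, each $\pi_k$ is edge-injective on $\vec{H}$; this produces the $2\binom{t}{2}$ color repetitions needed to contradict the coloring of $K_{V^{**}}$.

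Two edges of $\vec{H}$ sharing a vertex cannot collapse under $\pi_k$, as their other endpoints are at distance $2$ in $\vec{H}\subseteq\vec{G}'$ and property~3 of the pruning forbids distance-$\leq 2$ vertices from sharing coordinates. If $\pi_k$ collapses non-adjacent edges $e_1,e_2\in E(\vec{H})$, the resulting endpoint pairing cannot match two branches of $K_t^+$ (again at distance $2$), so it must be the ``crossed'' pairing identifying each branch endpoint with a non-incident subdivision endpoint of the other edge. The crucial observation is that if both $\pi_1$ and $\pi_2$ crossed-identify the same pair $e_1,e_2$, then writing out the coordinates of the four involved vertices forces a branch and a subdivision of $\vec{H}$ to agree as vectors in $\vec{G}'$, contradicting $\vec{H}\cong K_t^+$. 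The delicate point I expect to work hardest on is handling color classes of $\vec{H}$ containing $\geq 3$ edges, where several crossed identifications could interact: the main tools here are that a chain of crossed identifications inside a single $\pi_k$ would equate two distinct branches of $K_t^+$ at distance $2$ (so the identifications within each $\pi_k$ form a matching on every color class), and that no pair of edges is crossed-identified by both projections; together these constraints should close the case analysis and maintain $|E^*|-|C^*|\geq 2\binom{t}{2}$.
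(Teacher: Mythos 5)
Your overall framework matches the paper's: prune the second color energy graph, show it is $K_t^+$-free, invoke Janzer's bound $\ex(n, K_t^+)=O(n^{3/2-1/(4t-6)})$, and apply Proposition~\ref{prop::color energy}. The problem is in the core step of ruling out a copy $\vec{H}$ of $K_t^+$ in $\vec{G}'$: you never invoke the bipartiteness guaranteed by the pruning, and the alternative case analysis you sketch has a real gap.

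Property~2 of Definition~\ref{def::pruned energy} provides, for each coordinate $k$, a bipartition $V_k = V_k' \cup V_k''$ such that every edge of $\vec{G}'$ projects under $\pi_k$ to an edge with one endpoint in each part. Since $K_t^+$ is connected and bipartite, with branch vertices on one side and subdivision vertices on the other, $\pi_k$ sends all branch vertices of $\vec{H}$ into one part and all subdivision vertices into the other. Hence $\pi_k(\vec{x}_i) \neq \pi_k(\vec{y}_j)$ for every branch vertex $\vec{x}_i$ and subdivision vertex $\vec{y}_j$, so the ``crossed'' pairing you spend the bulk of your argument on is impossible outright. The only conceivable collapse is the parallel one, $\pi_k(\vec{x}_i)=\pi_k(\vec{x}_{i'})$ and $\pi_k(\vec{y}_j)=\pi_k(\vec{y}_{j'})$, which property~3 kills immediately (branch vertices are pairwise at distance~$2$, and two subdivision vertices adjacent to a common branch vertex are at distance~$2$). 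That is the paper's entire argument for edge-injectivity; no further case analysis is needed.

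The case analysis you propose instead does not close. Your two observations --- that crossed identifications within a single $\pi_k$ form a matching on each color class, and that no pair of edges is crossed-identified by both projections --- are correct, but they permit a bad configuration. Take a color class $E_\gamma=\{e_1,e_2,e_3,e_4\}$ of $\vec{H}$ in which $\pi_1$ crossed-identifies $\{e_1,e_2\}$ and $\{e_3,e_4\}$, while $\pi_2$ crossed-identifies $\{e_2,e_3\}$ and $\{e_1,e_4\}$. Both projections' identifications are matchings, and the two projections share no identified pair, yet the repetition count from this class is $|\pi_1(E_\gamma)| + |\pi_2(E_\gamma)| - 1 = 2 + 2 - 1 = 3 < 4 = |E_\gamma|$, so you lose one repetition and the bound $\geq 2\binom{t}{2}$ fails. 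More generally, any even cycle in the auxiliary graph on $E_\gamma$ whose red and blue edges record the crossed identifications of $\pi_1$ and $\pi_2$ respectively costs a repetition, and nothing in your stated constraints (nor in the $P_3$-preserving property alone, once $t$ is large enough to keep the relevant branch and subdivision vertices far apart) forbids such a cycle. The ``delicate point'' you flag is therefore not just hard work but an actual obstruction; it vanishes as soon as you use property~2.
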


\begin{proof} 
Let $\alpha>0$ be such that $\ex(n,K_t^+)=O(n^{2-\alpha})$, and let $G=(V,E)$ be a complete graph $K_n$, $C$ be a set of colors, and $\chi:E\rightarrow C$ be a $(2s,\binom{2s}{2}-2\binom{t}{2}+1)$-coloring of $G$, where $s=|V(K_t^+)|=t+\binom{t}{2}$. Consider the pruned 2nd energy graph $\vec{G}=(\vec{V},\vec{E})$. Assume that there is a copy $\vec{K}$ of $K_t^+$ in $\vec{G}$ with vertices $\vec{x}_1,\ldots,\vec{x}_t$ and $\vec{y}_1,\ldots,\vec{y}_{\binom{t}{2}}$. Recall that $\pi_k(\vec{K})$ is $P_3$-preserving and bipartite for $k=1,2$. Then clearly no $\vec{x}_i$ and $\vec{y}_j$ share any common coordinates, because of the bipartite structure of $\vec{G}$, and all of the coordinates of $\vec{x}_1,\ldots,\vec{x}_t$ are distinct by the $P_3$-preserving property. While it is possible for the same coordinate to appear in multiple vertices $\vec{y}_1,\ldots, \vec{y}_{\binom{t}{2}}$, corresponding to ``degenerate'' homomorphisms of $K_t^+$ in $G$, this will not change the number of distinct edges that we find in the two copies of $K_t^+$ in $G$. (If it did, then it would happen because two vertices $\vec{y}_i$ and $\vec{y}_j$ adjacent to the same vertex $\vec{x}_k$ shared a coordinate, which is forbidden in a pruned energy graph.) So, $\pi_k(\vec{K})$ has $2\binom{t}2$ edges for $k=1,2$, giving us a clique on at most $2s$ vertices with $2\binom{t}2$ color repetitions, contradicting the assumption about our coloring. Therefore, we have
\[ 
|E(\vec{G})|\leq\ex(n^2,K_t^+)=O\left(n^{4-2\alpha}\right),
\]
and hence by Propositions~\ref{prop::color energy} and \ref{prop::pruned exists}, $|C|=\Omega\left(n^{2\alpha}\right)$. Thus,  $K_t^+$ is $2$-nice, and letting $\alpha=1/2+1/(4t-6)$ yields the result.
\end{proof}

In the above proof we only considered the 2nd energy graph. That is because in order for the $r$-th pruned energy graph to exist, by Proposition~\ref{prop::pruned exists}, we need
\begin{equation}\label{equation subKt conditoin}
r|V(K_t^+)| - 3 \geq (r-1)|E(K_t^+)| - 1 ,
\end{equation}
as otherwise we cannot guarantee that the color degrees of each vertex are bounded. Inequality~\eqref{equation subKt conditoin} is only true for all $t$ when $r=2$. For larger $r$, some values of $t$ still satisfy \eqref{equation subKt conditoin}, but these choices of $t$ either do not give new bounds or are covered elsewhere. For example, setting $r=3$ and $t=3$ gives $K_3^+=C_6$, which is covered in Theorem~\ref{thm::theta}.

The proof of Theorem~\ref{thm::subKt} was relatively straightforward because $P_3$-preserving homomorphisms of $K_t^+$ are easy to understand: all the vertices corresponding to the original $K_t$ must be distinct, and the ``subdivision'' vertices only coincide when the edges of the original $K_t$ that correspond to these vertices form a matching. In particular, all $P_3$-preserving homomorphisms of $K_t^+$ have the same number of edges, which was useful in the proof. For other structures, including longer subdivisions of $K_t$, $P_3$-preserving homomorphisms are not so easily understood, and may not have the same number of edges as the original graph. In the next section, we develop tools to help us analyze $P_3$-preserving homomorphic images of general graphs, allowing us to apply the color energy techniques to theta graphs and subdivided complete bipartite graphs (where the edges are subdivided an arbitrary number of times).

\section{Analyzing Color-Homomorphic Images}\label{sec::homo}

In this section, we develop a framework that will help us prove Theorems~\ref{thm::theta} and \ref{thm::subKtt}, and also may be useful for proving further results outside the scope of this paper.

As usual, let $G$ be an edge-colored graph and $\vec{G}$ be a pruned $r$-th energy graph of $G$. Let $H\subseteq G$ and $\vec{T}\subseteq \vec{G}$ with $m:=|E(\vec{T})|$. We denote by $H_k=H\cap G[V_k]$ the $k$-th coordinate of $H$, for each $1\leq k\leq r$. An ordering $\sigma$ of $E(\vec{T})=\{\vec{e}_1,\dots,\vec{e}_{m}\}$ is called \emph{$H$-compatible} if for every $i\in [m]$, there exists an endpoint $\vec{v}$ of $\vec{e}_i$ such that 
\[
\pi(\vec{v})\subseteq H\cup\bigcup_{j=1}^{i-1} \pi(\vec{e}_j).
\]

\subsection{Graph Revealing Algorithm}\label{sec::homo:alg}

Given graphs $H\subseteq G$ and $\vec{T}\subseteq \vec{G}$, we wish to understand the number of vertices and the number of repetitions in $H\cup\pi(\vec{T})$. To do this, we give a simple algorithm that adds vertices from $\pi(\vec{T})$ to $H$ in $|E(\vec{T})|$ steps, given an $H$-compatible ordering of $E(\vec{T})$. During the algorithm we keep track of several parameters, which in the proofs of Theorems~\ref{thm::theta} and \ref{thm::subKtt} allow us to leverage the structure of $\vec{T}$ and $H$ to analyze the number of vertices and repetitions $\pi(\vec{T})$ adds to $H$.

Let $m:=|E(\vec{T})|$, and let $E(\vec{T})=\{\vec{e}_1,\dots,\vec{e}_{m}\}$, where the ordering of the edges, say $\sigma$, is $H$-compatible. Let $\vec{e}_i=\vec{u}_i\vec{v}_i$, where $\pi(\vec{u}_i)\subseteq H\cup\bigcup_{j=1}^{i-1}\pi(\vec{e}_{j})$. We recursively build graphs $H = H^{(0)}, H^{(1)}, \dots, H^{(m)} = H \cup \pi(\vec{T})$, where for $1 \leq i \leq m$,
\[
H^{(i)} = H^{(i-1)}\cup\pi(\vec{e}_i).
\]

We sequentially add the edges of $\vec{T}$ to $H$ according to the $H$-compatible ordering of $E(\vec{T})$. We now define some terminology which will be useful for analyzing this process.
\begin{itemize}
	\item We say that step $i$ gives us a \emph{new vertex in coordinate $k$} if the vertex $\pi_k(\vec{v}_i)\not\in V(H_k^{(i-1)})$ (which implies that the edge $\pi_k(\vec{e}_i)\not\in E(H_k^{(i-1)})$). Let $n_{i,k}(\vec{T},H,\sigma):=1$ if we get a new vertex in coordinate $k$ on step $i$, and $n_{i,k}(\vec{T},H,\sigma):=0$ otherwise.
	\item We say that step $i$ gives us a \emph{savings in coordinate $k$} if the vertex $\pi_k(\vec{v}_i)\in V(H_k^{(i-1)})$, but the edge $\pi_k(\vec{e}_i)\not\in E(H_k^{(i-1)})$. Let $s_{i,k}(\vec{T},H,\sigma):=1$ if we get a savings in coordinate $k$ on step $i$, and $s_{i,k}(\vec{T},H,\sigma):=0$ if we do not.
	\item We say that step $i$ gives us a \emph{delayed vertex in coordinate $k$} if the edge $\pi_k(\vec{e}_i)\in E(H_k^{(i-1)})$ (which implies that the vertex $\pi_k(\vec{v}_i) \in V(H_k^{(i-1)})$). Set $d_{i,k}(\vec{T},H,\sigma):=1$ if we get a delayed vertex in coordinate $k$ on step $i$, and $d_{i,k}(\vec{T},H,\sigma):=0$ otherwise.
\end{itemize}
Often $\vec{T}$, $H$ and $\sigma$ will be clear from context. In those cases, we will omit them from the notation. By definition, $n_{i,k} + s_{i,k} + d_{i,k} = 1$. We now define aggregate parameters derived from $n_{i,k}$, $s_{i,k}$ and $d_{i,k}$. Let
\[
\begin{array}{ccccc}
	n_i:=\displaystyle\sum_{k=1}^r n_{i,k}, & & N:=\displaystyle\sum_{i=1}^{m}n_i, & & N_k:=\displaystyle\sum_{i=1}^{m} n_{i,k}, \\
	s_i:=\displaystyle\sum_{k=1}^r s_{i,k}, & & S:=\displaystyle\sum_{i=1}^{m}s_i, & & S_k:=\displaystyle\sum_{i=1}^{m}s_{i,k}, \\
	d_i:=\displaystyle\sum_{k=1}^r d_{i,k}, & & D:=\displaystyle\sum_{i=1}^{m}d_i, & &
	D_k:=\displaystyle\sum_{i=1}^{m}d_{i,k}\\
\end{array}
\]
be the new vertices, savings, and delayed vertices in step $i$, in total, and in each coordinate, respectively. We emphasize here that each of these parameters are functions of $\vec{T}$, $H$ and $\sigma$, so we may write $N(\vec{T},H,\sigma)$, $S_k(\vec{T},H,\sigma)$ or other parameters with these variables in cases where the triple $(\vec{T},H,\sigma)$ is not clear from context. Summing $n_{i,k} + s_{i,k} + d_{i,k} = 1$ over $k$, we get for all $1\leq i\leq m$
\begin{equation}\label{eq::add to r}
	n_i+s_i+d_i=r,
\end{equation}
and summing this over $i$, we get
\[ N + S + D = rm .\]

Finally, let
\[ d:=\sum_{i=1}^{m} \mathds{1}_{(d_i=0)} \]
be the number of steps where there are no delayed vertices.

We can precisely describe the number of vertices and repetitions added to $H$ by $\vec{T}$ under the ordering $\sigma$ using these parameters. By the definition of $N$, we have
\begin{equation}\label{eq::vertices from revealing}
	|V(H\cup\pi(\vec{T}))|=|V(H)|+N=|V(H)|+rm-S-D.
\end{equation}
Furthermore, if $R$ is the number of repetitions in $H$ and $R^*$ is the number of repetitions in $H\cup \pi(\vec{T})$, then
\begin{equation}\label{eq::repeats from revealing}
	R^*-R\geq \sum_{i=1}^{m} \left(n_i+s_i-\mathds{1}_{(d_i=0)}\right)=rm-D-d.
\end{equation}
Indeed, in step $i$, we introduce exactly $n_i+s_i$ new edges, all of the same color, say $c$. If $d_i=0$, then this constitutes $r$ new edges of the same color, givings us $r-1=n_i+s_i-1$ new repetitions. If $d_i\neq 0$, then for some $k$, $\pi_k(\vec{u}_{i}\vec{v}_{i})\in E(H_k^{(i-1)})$, which implies that there already was an edge of color $c$ present in $H_k^{(i-1)}$, so all $n_i+s_i$ new edges are repetitions.

\subsection{Delayed Vertices}\label{sec::homo:delayed}

After performing the graph revealing algorithm, if there are many delayed vertices, then we do not expect to have as many repetitions as we desire. However, we also do not have as many vertices in $H\cup\pi(\vec{T})$ as we expected, and to capitalize on that, we wish to add more vertices to get more repetitions. In general, for every $r$ new vertices, we expect $r$ new edges, and thus $r-1$ new repetitions; if we have $D$ delayed vertices, we wish to get about $\frac{r-1}{r} D$ extra repetitions. The goal of this subsection is to make this more precise. We achieve these extra repetitions via an easy-to-find gadget in $\vec{G}$.

Let $H\subseteq G$. An \emph{$H$-reservoir with source $\vec{v}$} is a set $\vec{R}\subseteq V(\vec{G})$ along with a vertex $\vec{v}\in V(\vec{G})$ such that the following holds:
\begin{enumerate}
	\item $\pi(\vec{v})\subseteq H$,
	\item $\vec{u}\vec{v}\in E(\vec{G})$ for all $\vec{u}\in \vec{R}$,
	\item $H$ and $\pi(\vec{u})$ are disjoint for all $\vec{u} \in \vec{R}$.
\end{enumerate}

Note that if $\pi(\vec{v}) \subseteq F \subseteq H$ and $\vec{R}$ is an $H$-reservoir with source $\vec{v}$, then $\vec{R}$ is also an $F$-reservoir. We now show that $H$-reservoirs allow us to add repetitions we missed from delayed vertices.

\begin{lemma}\label{lem::reservoir}
	Let $H\subseteq G$, and let $\vec{R}$ be an $H$-reservoir. For any non-negative integer $D$ with $D\leq r|\vec{R}|$ there exists some graph $H^*\subseteq G$ that satisfies the following:
	\begin{itemize}
		\item $H\subseteq H^*\subseteq H\cup \pi(\vec{R})$,
		\item $|V(H^*)|=|V(H)|+D$, and
		\item $H^*$ has at least $\left\lfloor\frac{(r-1)}rD\right\rfloor$ more repetitions than $H$.
	\end{itemize}
\end{lemma}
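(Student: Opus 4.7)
The plan is to construct $H^*$ by greedily absorbing vertices of the reservoir $\vec{R}$ through the source $\vec{v}$. Writing $D = rq + s$ with $0 \le s < r$, the hypothesis $D \le r|\vec{R}|$ lets me pick distinct $\vec{u}_1, \dots, \vec{u}_q \in \vec{R}$ together with one additional $\vec{u}_{q+1} \in \vec{R}$ if $s > 0$. Since each edge of $\vec{G}$ projects under $\pi$ to $r$ edges of $G$ all of the same color, the natural candidate is
\[
H^* := H \cup \bigcup_{j=1}^{q} \pi(\vec{u}_j\vec{v}) \cup \bigcup_{k=1}^{s} \pi_k(\vec{u}_{q+1}\vec{v}),
\]
where the last union is interpreted as empty when $s = 0$. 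The inclusions $H \subseteq H^* \subseteq H \cup \pi(\vec{R})$ then hold by construction.

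For the vertex count I would invoke two features of the pruned energy graph. Property~1 of Definition~\ref{def::pruned energy} places the $r$ coordinates of any tuple in distinct parts $V_k$, so $|\pi(\vec{u}_j)| = r$. Property~3 applied to the pairwise distinct tuples $\vec{v}, \vec{u}_1, \dots, \vec{u}_{q+1}$, which are at pairwise distance at most $2$ in $\vec{G}$ via $\vec{v}$, guarantees that no two of them share any coordinate. Combined with the reservoir disjointness condition $\pi(\vec{u}_j) \cap H = \emptyset$, each full addition then introduces exactly $r$ fresh vertices and the partial addition introduces exactly $s$, giving $|V(H^*)| = |V(H)| + rq + s = |V(H)| + D$.

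For the repetition count the key observation is that adding $r$ edges of a single color to any graph raises the repetition count by at least $r-1$, regardless of whether that color already appears. Thus each of the $q$ full additions contributes at least $r - 1$ new repetitions, and when $s \ge 1$ the partial addition contributes at least $s - 1$ more. A short check gives $q(r-1) + \max(s-1, 0) = \lfloor (r-1)D/r \rfloor$, using that $s - 1 \le (r-1)s/r < s$ for $1 \le s \le r - 1$.

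The whole argument is essentially a bookkeeping exercise, and the only mild subtlety is remembering that it is precisely the pruned energy graph hypotheses, namely the coordinate partition and the $P_3$-preserving property, that turn the vertex accounting into an exact equality rather than a mere lower bound. No extremal input beyond the reservoir structure itself is required.
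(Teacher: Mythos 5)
Your proof is correct and follows essentially the same strategy as the paper's: write $D = rq + s$, add $q$ full projected edges $\pi(\vec{u}_j\vec{v})$ and then $s$ coordinates of one more, and count repetitions group by group using the fact that a group of $m$ equal-colored new edges contributes at least $m-1$ repetitions. You are slightly more explicit than the paper about why the vertex count is exactly $D$ rather than merely at most $D$ (the paper glosses over this), correctly invoking Property~1 of the pruned energy graph for $|\pi(\vec{u}_j)| = r$ and Property~3 together with the reservoir disjointness condition to rule out coordinate collisions among $\vec{v}, \vec{u}_1, \dots, \vec{u}_{q+1}$; that extra care is a genuine improvement in exposition, since the reservoir definition alone does not forbid two reservoir vertices from sharing a coordinate.
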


\begin{proof}
	Let $\vec{v}$ be the source of the reservoir $\vec{R}$. Let $w,z$ be integers with $0\leq z<r$ such that $D=wr+z$, and note that
	\[
	\left\lfloor\frac{(r-1)}rD\right\rfloor=(r-1)w+z-\mathds{1}_{(z\neq 0)}.
	\]
	Choose $w$ vertices from $\vec{R}$, say $\vec{v}_1,\dots,\vec{v}_w$, and add $\pi(\vec{v} \vec{v}_i)$ to $H$ to form $H'$. Note that since $\vec{v}\vec{v_i}\in E(\vec{G})$ for all $1\leq i\leq w$, there are $r$ edges in $E(H')\setminus E(H)$ from vertices in $\pi(\vec{v})$ to vertices in $\pi(\vec{v_i})$, all of which are the same color, giving us $(r-1)w$ new repetitions. If $z=0$, then $H'$ satisfies the statement of the lemma, so we are done. If $z\neq 0$, then let $\vec{v}_{w+1}$ be any vertex in $\vec{R}\setminus\{\vec{v}_1,\dots,\vec{v}_w\}$. Add $\pi_k(\vec{v}_{w+1})$ to $H'$ for $1\leq k\leq z$ to form $H^*$. Note that this gives us a collection of $z$ more edges, all of the same color, yielding $z-1$ more repetitions, so $H^*$ satisfies the conditions of the lemma.
\end{proof}

\subsection{Constructing Graphs with the Right Number of Repetitions}\label{sec::homo:main}

\begin{theorem}\label{thm::homo with right reps}
	Let $H\subseteq G$ and $\vec{T}\subseteq \vec{G}$ be graphs. Fix some $H$-compatible ordering of $E(\vec{T})$, and let $\vec{R}$ be an $H\cup\pi(\vec{T})$-reservoir with $|\vec{R}|\geq \lceil D/r\rceil$. Let $m:=|E(\vec{T})|$. If
	\begin{equation}\label{eq::total savings}
	S + \sum_{i=1}^m \mathds{1}_{(d_i>0)}\left(\frac{r-d_i}{r-1}\right) \geq t,
	\end{equation}
	for some integer $t\geq 0$, then there exists a graph $H^*$ with $H\cup \pi(\vec{T})\subseteq H^*\subseteq H\cup \pi(\vec{T})\cup \pi(\vec{R})$, such that
	\[
	|V(H^*)|\leq |V(H)|+rm-t,
	\]
	and the number of repetitions in $H^*$ that are not in $H$ is at least 
	\[
	(r-1)m.
	\]
\end{theorem}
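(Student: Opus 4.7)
The plan is to apply the graph revealing algorithm of Section~\ref{sec::homo:alg} to $\vec{T}$ under the given $H$-compatible ordering, and then use Lemma~\ref{lem::reservoir} on the reservoir $\vec{R}$ to make up any deficit in repetitions. Running the algorithm yields the graph $H \cup \pi(\vec{T})$, which by~\eqref{eq::vertices from revealing} has $|V(H)| + rm - S - D$ vertices and by~\eqref{eq::repeats from revealing} contributes at least $rm - D - d$ new repetitions over $H$. Since each positive $d_i$ is at least $1$ and there are exactly $m - d$ such indices, $D \geq m - d$, so the repetition shortfall
\[ (r - 1)m - (rm - D - d) = D + d - m \]
is a nonnegative integer.

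I would then set $D' := \lceil r(D + d - m)/(r - 1) \rceil$ and apply Lemma~\ref{lem::reservoir} to obtain $H^*$ with $H \cup \pi(\vec{T}) \subseteq H^* \subseteq H \cup \pi(\vec{T}) \cup \pi(\vec{R})$, adding $D'$ vertices and at least $\lfloor (r - 1) D'/r \rfloor \geq D + d - m$ further repetitions; combined with the earlier $rm - D - d$ this totals at least $(r - 1) m$ new repetitions over $H$. To invoke Lemma~\ref{lem::reservoir} I need $D' \leq r|\vec{R}|$: since each $d_i \leq r$ and only $m - d$ of them are positive, $D \leq r(m - d)$, which rearranges to $r(D + d - m)/(r - 1) \leq D$, giving $D' \leq D \leq r \lceil D/r \rceil \leq r|\vec{R}|$.

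The main obstacle, and the step that ties the argument together, is verifying the vertex bound $|V(H^*)| \leq |V(H)| + rm - t$, which rearranges to $D' \leq S + D - t$. Using the identity
\[ \sum_{i = 1}^{m} \mathds{1}_{(d_i > 0)} \frac{r - d_i}{r - 1} = \frac{r(m - d) - D}{r - 1}, \]
which holds because the positive $d_i$'s sum to $D$ and there are $m - d$ of them, hypothesis~\eqref{eq::total savings} rewrites as $S + \frac{r(m - d) - D}{r - 1} \geq t$; a short manipulation then yields $S + D - t \geq \frac{r(D + d - m)}{r - 1}$. Since $S + D - t$ is an integer, it dominates the integer ceiling $D' = \lceil r(D + d - m)/(r - 1) \rceil$, delivering the vertex bound and completing the proof.
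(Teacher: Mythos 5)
Your proof is correct and follows essentially the same two-step strategy as the paper: reveal $\vec{T}$ to obtain $H' = H\cup\pi(\vec{T})$, read off its vertex and repetition counts from \eqref{eq::vertices from revealing} and \eqref{eq::repeats from revealing}, and then invoke Lemma~\ref{lem::reservoir} on $\vec{R}$ to make up the repetition shortfall $D+d-m$. The one substantive difference is how many reservoir vertices you add. The paper takes $D' = S+D-t$, the most the vertex budget permits, and then checks the repetition bound; you take $D' = \left\lceil r(D+d-m)/(r-1)\right\rceil$, the fewest that close the repetition gap, and then check the vertex budget. Your choice has a genuine small advantage: the reservoir-size requirement $D'\leq r|\vec{R}|$ becomes automatic, because you observe $D'\leq D$, whereas the paper's $D' = S+D-t$ can exceed $D$ whenever $t<S$ — a situation the hypothesis allows but which the paper's proof never explicitly rules out. (That omission is harmless in practice: one can always replace $t$ by $\lfloor \mathrm{sav}(\vec{T},H,\sigma)\rfloor\geq t$, for which $S+D-t\leq D$, and the conclusion for the larger $t$ implies it for the smaller. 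Still, your argument sidesteps the issue entirely.) The algebra you do to verify the vertex bound — rewriting $\sum_{i=1}^m \mathds{1}_{(d_i>0)}\frac{r-d_i}{r-1} = \frac{r(m-d)-D}{r-1}$, rearranging the hypothesis, and using integrality of $S+D-t$ and of $D+d-m$ — is correct, and is essentially the mirror image of the floor computation the paper performs at the end of its proof.
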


\begin{proof}
	Let $H'=H\cup \pi(\vec{T})$. By \eqref{eq::vertices from revealing},
	\[
	|V(H')|=|V(H)|+rm-S-D,
	\]
	and by \eqref{eq::repeats from revealing}, $H'$ has
	\[
	rm-D-d
	\]
	more repetitions than $H$ does. Let
	\[
	D':=S+D-t \geq D - \sum_{i=1}^m \mathds{1}_{(d_i>0)}\left(\frac{r-d_i}{r-1}\right).
	\]
	Apply Lemma~\ref{lem::reservoir} (with $H'$ as $H$, $\vec{R}$ as $\vec{R}$, and $D'$ as $D$) to find a graph $H^*$ with $H'\subseteq H^*\subseteq H'\cup\pi(\vec{R})$ such that
	\[
	|V(H^*)|=|V(H')|+D'=|V(H)|+rm-t,
	\]
	so $H^*$ has the correct number of vertices. In addition, $H^*$ has $\left\lfloor\frac{(r-1)}rD'\right\rfloor$ more repetitions than $H'$. This gives us that $H^*$ has at least
	\begin{align*}
	rm-D-d+\left\lfloor\frac{r-1}rD'\right\rfloor &\geq rm-D-d+\left\lfloor \frac{r-1}{r} \left(D - \sum_{i=1}^m \mathds{1}_{(d_i>0)}\left(\frac{r-d_i}{r-1}\right) \right) \right\rfloor\\
	&=(r-1)m+\left\lfloor m-\frac{D}r-d-\sum_{i=1}^m \mathds{1}_{(d_i>0)}\left(\frac{r-d_i}{r}\right)\right\rfloor\\
	&=(r-1)m+\left\lfloor m-\frac{D}r-d-(m-d)+\frac{D}r\right\rfloor\\
	&=(r-1)m
	\end{align*}
	repetitions that are not in $H$.
\end{proof}

In light of Theorem~\ref{thm::homo with right reps},  given graphs $H$ and $\vec{T}$ (with a fixed $H$-compatible ordering of $E(\vec{T})$), we define the \emph{total savings of $\vec{T}$ with respect to $H$ and the ordering $\sigma$}, $\mathrm{sav}(\vec{T},H,\sigma)$, to be
\[
\mathrm{sav}(\vec{T},H,\sigma):=S + \sum_{i=1}^m \mathds{1}_{(d_i>0)}\left(\frac{r-d_i}{r-1}\right).
\]

\subsection{Properties of the Graph Revealing Algorithm}\label{sec::homo:reveal}

In this subsection, we will provide some nice properties of the parameters we get from the graph revealing algorithm.

\subsubsection*{Order-Invariance}\label{sec::homo:reveal:order}

First we show that given graphs $H\subseteq G$ and $\vec{T}\subseteq\vec{G}$, many of the parameters given by the graph revealing algorithm are constant over all $H$-compatible orderings of $\vec{T}$.

\begin{observation}\label{obs::order invariance}
	Given graphs $H\subseteq G$ and $\vec{T}\subseteq \vec{G}$, for every $k\in [r]$ the quantities $N_k$, $S_k$ and $D_k$ are constant across all $H$-compatible orderings of $\vec{T}$. Consequently, the quantities $N$, $S$, and $D$ are also constant across all $H$-compatible orderings of $\vec{T}$.
\end{observation}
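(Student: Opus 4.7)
The plan is to show that each of $N_k$, $D_k$, and $S_k$ can be read off directly from the final graph $H \cup \pi(\vec{T})$, with no reference to the order in which the edges of $\vec{T}$ are processed. Since the relation $n_{i,k} + s_{i,k} + d_{i,k} = 1$ summed over $i$ gives $N_k + S_k + D_k = m$ for every coordinate $k$, establishing order-invariance for any two of these three quantities automatically gives it for the third. The order-invariance of $N$, $S$, and $D$ will then follow by summing over $k$.

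First I would handle $N_k$. The key observation, which relies on the $H$-compatibility of $\sigma$, is that every vertex in $V(\pi_k(\vec{T})) \setminus V(H_k)$ must make its first appearance in the algorithm as some $\pi_k(\vec{v}_i)$ rather than as a $\pi_k(\vec{u}_i)$: the latter is forbidden because $H$-compatibility forces $\pi(\vec{u}_i) \subseteq H^{(i-1)}$, and in particular $\pi_k(\vec{u}_i) \in V(H_k^{(i-1)})$. Each such vertex thus contributes exactly once to $N_k$ (at the step of its first appearance), while no other step increments $N_k$, so
\[
N_k = |V(H_k \cup \pi_k(\vec{T}))| - |V(H_k)|,
\]
which depends only on $H$ and $\vec{T}$.

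Next I would handle $D_k$ by partitioning the edges of $\vec{T}$ according to their images under $\pi_k$. For each fiber $\{\vec{e}_i : \pi_k(\vec{e}_i) = e\}$, one of two things happens: either $e \in E(H_k)$, in which case every edge in the fiber contributes to $D_k$ regardless of $\sigma$; or $e \notin E(H_k)$, in which case only the first-processed edge of the fiber has $d_{i,k}=0$, and all later ones contribute to $D_k$. In both cases exactly one unit is lost to $D_k$ per \emph{newly revealed} edge of $\pi_k(\vec{T})$, so
\[
D_k = m - |E(\pi_k(\vec{T})) \setminus E(H_k)|,
\]
which again sees only $H$, $\vec{T}$, and the coordinate projection.

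Finally, $S_k = m - N_k - D_k$ inherits order-invariance, and summation over $k$ gives the statements for $N$, $S$, and $D$. I do not anticipate a real obstacle here: the entire content is to identify what each event really measures in the completed image, and the $H$-compatibility hypothesis is exactly the ingredient that lets one attribute each newly revealed vertex or edge to a well-defined ``first'' step, so that the relevant counts collapse to intrinsic invariants of the pair $(H,\vec{T})$.
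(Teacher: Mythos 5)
Your proof is correct and takes essentially the same approach as the paper: both express $N_k$ and $D_k$ as intrinsic invariants of the pair $(H,\vec{T})$ (you via $N_k = |V(H_k \cup \pi_k(\vec{T}))| - |V(H_k)|$ and $D_k = m - |E(\pi_k(\vec{T})) \setminus E(H_k)|$, the paper via the equivalent fiber-sum $D_k = \sum_{e} \max\{0, |\pi_k^{-1}(e)| - \mathds{1}_{(e\notin E(H))}\}$) and then obtain $S_k$ from $N_k + S_k + D_k = m$. Your explicit use of $H$-compatibility to show $\pi_k(\vec{u}_i) \in V(H_k^{(i-1)})$, so that new vertices can only arise from the $\pi_k(\vec{v}_i)$, is a detail the paper leaves implicit but is exactly the right justification.
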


\begin{proof}
	Let $\sigma$ be an $H$-compatible ordering of $\vec{T}$. Since \[
	n_{i,k}(\vec{T},H,\sigma)+s_{i,k}(\vec{T},H,\sigma)+d_{i,k}(\vec{T},H,\sigma)=1,
	\]
	we have that $N_k+S_k+D_k=m$, so it will suffice to show that two of these parameters are constant (with respect to $\sigma$). First consider $N_k$, and note that
	\[
	N_k(\vec{T},H,\sigma)=|V(H\cup \pi_k(\vec{T}))|-|V(H)|,
	\]
	and the right side of the above equation is independent of $\sigma$. Now consider $D_k$. Given an edge $e\in E(G)$, let $\pi_k^{-1}(e)$ be the preimage of $e$ under $\pi_k$, or the set containing every edge of $\vec{T}$ that gets mapped to $e$. Then
	\[
	D_k(\vec{T},H,\sigma)=\sum_{e\in E(G)} \max\{0,|\pi^{-1}_k(e)|-\mathds{1}_{(e\notin E(H))}\}.
	\]
	Indeed, regardless of the ordering on $E(\vec{T})$, if $e\in E(H)$, then every edge of $\pi_k^{-1}(e)$ gives us one delayed vertex in coordinate $k$ on the step it is revealed. If $e\not\in E(H)$, then the first edge revealed in $\pi_k^{-1}(e)$ does not give us a delayed vertex in coordinate $k$, but all others do. Note again that the expression we derived for $D_k(\vec{T},H,\sigma)$ is independent of $\sigma$.
\end{proof}

It is worth noting that the parameter $d$ is not necessarily constant across all $H$-compatible orderings. In light of the preceding observation, in cases where we need to clarify $H$ and $\vec{T}$, we may write parameters such as $S_k(\vec{T},H)$, $D(\vec{T},H)$, and others without reference to $\sigma$. Since the parameters $n_{i,k}$, $s_{i,k}$, $d_{i,k}$, $n_i$, $s_i$, $d_i$ and $d$ all require the edge-ordering $\sigma$ to be well-defined though, we will specify $\sigma$ as appropriate.

\subsubsection*{Additivity}\label{sec::homo:reveal:add}

Let $\vec{T}_1,\vec{T}_2\subseteq \vec{G}$ be edge-disjoint graphs and $\sigma_1$ and $\sigma_2$ be $H$-compatible orderings of $\vec{T}_1$ and $\vec{T}_2$ respectively. Set $\vec{T}:=\vec{T}_1\cup\vec{T}_2$ and let $\sigma$ be the $H$-compatible ordering of $\vec{T}$ given by first ordering the edges of $\vec{T}_1$ in the order given by $\sigma_1$, then ordering the edges of $\vec{T}_2$ in the order given by $\sigma_2$. Then we have
\begin{equation}\label{eq::additivity}
	\mathrm{sav}(\vec{T},H,\sigma)=\mathrm{sav}(\vec{T}_1,H,\sigma_1)+\mathrm{sav}(\vec{T}_2,H\cup\pi(\vec{T}_1),\sigma_2).
\end{equation}

\subsubsection*{Monotonicity}\label{sec::homo:reveal:mono}

The parameters given by the graph revealing process also satisfy certain monotonicity properties with respect to subgraphs. More specifically, if $F_1\subseteq F_2\subseteq G$ and $\vec{T}\subseteq \vec{G}$ are graphs, and $\sigma$ is an $F_1$-compatible ordering of $\vec{T}$ (and $F_2$-compatible since $F_1\subseteq F_2$), then
\[
n_{i,k}(\vec{T},F_1,\sigma)\geq n_{i,k}(\vec{T},F_2,\sigma)
\]
and
\[
d_{i,k}(\vec{T},F_1,\sigma)\leq d_{i,k}(\vec{T},F_2,\sigma)
\]
for all $1\leq i\leq |E(\vec{T})|$ and $k\in [r]$. Indeed, if $n_{i,k}(\vec{T},F_2,\sigma)=1$, then $n_{i,k}(\vec{T},F_1,\sigma)=1$, since the second endpoint of $\pi_k(\vec{e}_i)$ cannot be in $F_1\cup\bigcup_{j=1}^{i-1}\pi(\vec{e}_j)$ if it is not in $F_2\cup\bigcup_{j=1}^{i-1}\pi(\vec{e}_j)$. Similarly, if $d_{i,k}(\vec{T},F_1,\sigma)=1$, then $d_{i,k}(\vec{T},F_2,\sigma)=1$, since  $\pi_k(\vec{e}_i)\subseteq F_1\cup\bigcup_{j=1}^{i-1}\pi(\vec{e}_j)$ implies $\pi_k(\vec{e}_i)\subseteq F_2\cup\bigcup_{j=1}^{i-1}\pi(\vec{e}_j)$ as well. Consequently, we have that
\begin{equation}\label{eq::Nmonotone}
N_k(\vec{T},F_1)\geq N_k(\vec{T},F_2)
\end{equation}
and
\begin{equation}\label{eq::Dmonotone}
D_k(\vec{T},F_1)\leq D_k(\vec{T},F_2).
\end{equation}

Unfortunately, $s_{i,k}$ does not satisfy such a monotonicity property. In general, steps in which we get a vertex savings with respect to $F_2$ may be new vertices with respect to $F_1$, and steps in which we get delayed vertices with respect to $F_2$ may be vertex savings with respect to $F_1$, so the number of vertex savings may increase or decrease when revealing $\vec{T}$ with respect to a subgraph.

\subsubsection*{Properties of Revealing Paths}\label{sec::homo:reveal:path}

Often the graph $\vec{T}$ we reveal with the graph revealing algorithm is a path (or a collection of paths). Given a path $\vec{P}\subseteq\vec{G}$ with endpoints $\vec{u}$ and $\vec{v}$, the \emph{canonical ordering of $\vec{P}$ from $\vec{u}$ to $\vec{v}$} is the ordering $\sigma$ of $E(\vec{P})=\{e_1,e_2,\dots,e_{\ell}\}$ such that $e_i$ appears before $e_{i+1}$ on $\vec{P}$ as we traverse $\vec{P}$ from $\vec{u}$ to $\vec{v}$. The following lemma will be very useful for finding vertex savings while revealing paths.

\begin{lemma}\label{lem::eventual savings}
	Let $F\subseteq G$. Let $\vec{P}=(\vec{v}_0,\vec{v}_1,\dots,\vec{v}_\ell)$ be a path in $\vec{G}$ such that $\pi(\vec{v}_0)\subseteq F$, and let $E(\vec{P})=\{\vec{e}_j\mid j\in [\ell]\}$ be canonically ordered from $\vec{v}_0$ to $\vec{v}_\ell$. If there exists a choice of $k\in [r]$ and $j,j'\in [\ell]$ with $j\leq j'$ such that
	\begin{enumerate}
		\item $\pi_k(\vec{e}_j)\not\in E\left(F\cup\bigcup_{i'=1}^{j-1}\pi_k(\vec{e}_{i'})\right)$, and
		\item $\pi_k(\vec{v}_{j'})\in V(F)$,
	\end{enumerate}
	then there is a vertex savings in coordinate $k$ with respect to $F$ as we reveal the path $\vec{P}$ at some step $j^*$ with $j\leq j^*\leq j'$.
\end{lemma}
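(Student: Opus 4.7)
The plan is to take $j^{*}$ to be the smallest index in $[j,j']$ satisfying $\pi_k(\vec{v}_{j^{*}})\in V(F_k^{(j^{*}-1)})$. Such a $j^{*}$ exists because hypothesis~2 ensures $\pi_k(\vec{v}_{j'})\in V(F)\subseteq V(F_k^{(j'-1)})$, so at worst $j^{*}=j'$ qualifies. To show that step $j^{*}$ is a savings in coordinate $k$ with respect to $F$, I need to verify both $\pi_k(\vec{v}_{j^{*}})\in V(F_k^{(j^{*}-1)})$ (immediate from the choice of $j^{*}$) and $\pi_k(\vec{e}_{j^{*}})\notin E(F_k^{(j^{*}-1)})$.

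If $j^{*}=j$, the edge condition is precisely hypothesis~1, so assume $j^{*}>j$. By the minimality of $j^{*}$, we have $\pi_k(\vec{v}_i)\notin V(F_k^{(i-1)})$ for every $i\in [j,j^{*}-1]$; in particular, taking $i=j^{*}-1$ (which lies in this range since $j^{*}>j$) yields $\pi_k(\vec{v}_{j^{*}-1})\notin V(F_k^{(j^{*}-2)})$. Consequently $\pi_k(\vec{v}_{j^{*}-1})\notin V(F)$, and the vertex $\pi_k(\vec{v}_{j^{*}-1})$ enters $F_k$ for the very first time at step $j^{*}-1$.

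The main step is to argue that the only edge of $F_k^{(j^{*}-1)}$ incident to $\pi_k(\vec{v}_{j^{*}-1})$ is $\pi_k(\vec{e}_{j^{*}-1})$ itself. No edge of $F$ qualifies since $\pi_k(\vec{v}_{j^{*}-1})\notin V(F)$. For $1\leq i\leq j^{*}-2$, if $\pi_k(\vec{e}_i)$ were incident to $\pi_k(\vec{v}_{j^{*}-1})$, then $\pi_k(\vec{v}_{j^{*}-1})$ would coincide with $\pi_k(\vec{v}_{i-1})$ or $\pi_k(\vec{v}_i)$, placing it in $V(F_k^{(j^{*}-2)})$ and contradicting the previous paragraph. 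Now I invoke the no-turnaround property of the pruned energy graph (a consequence of Definition~\ref{def::pruned energy}(3) noted in Section~\ref{sec::color energy:pruned}): because $\vec{e}_{j^{*}-1}$ and $\vec{e}_{j^{*}}$ are consecutive edges of $\vec{P}$, we have $\pi_k(\vec{e}_{j^{*}})\neq \pi_k(\vec{e}_{j^{*}-1})$. Combined with the previous observation, this gives $\pi_k(\vec{e}_{j^{*}})\notin E(F_k^{(j^{*}-1)})$, so step $j^{*}$ indeed yields a savings in coordinate $k$.

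The main obstacle is the bookkeeping in the third paragraph: one must simultaneously track which vertices and which edges populate each $F_k^{(i)}$, and use that the only way a previously revealed edge of $F_k$ can touch $\pi_k(\vec{v}_{j^{*}-1})$ is if some earlier coordinate image of a path vertex had already equaled $\pi_k(\vec{v}_{j^{*}-1})$. The minimality of $j^{*}$ rules this out, and the no-turnaround property disposes of the remaining ``immediate backtrack'' case; together these two ingredients collapse the argument to the short case analysis above.
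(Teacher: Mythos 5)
Your proof is correct and follows essentially the same route as the paper's. The paper defines $j^*$ as the first index strictly greater than $j$ with the vertex-containment property (after disposing of the case where step $j$ is already a savings), while you define $j^*$ as the smallest index in $[j,j']$ with that property; the two bookkeeping choices coincide, and both proofs hinge on the same two ingredients — that $\pi_k(\vec{v}_{j^*-1})$ is a freshly revealed vertex of degree one, and that the no-turnaround ($P_3$-preserving) property of the pruned energy graph forces $\pi_k(\vec{e}_{j^*})\neq\pi_k(\vec{e}_{j^*-1})$ — so that step $j^*$ cannot be a delayed vertex. Your write-up simply unpacks into explicit steps what the paper compresses into a parenthetical remark.
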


\begin{proof}
	By (1.) above, step $j$ is either a new vertex or a vertex savings in coordinate $k$. If step $j$ is a vertex savings, then we are done. If not, and step $j$ is a new vertex, let $j^*>j$ be the first index such that 
	\[
	\pi_k(\vec{v}_{j^*})\in V\left(F\cup \bigcup_{i'=1}^{j^*-1}\pi_k(\vec{e}_{i'})\right).
	\] 
	Note that such an index exists and $j^*\leq j'$ since $\pi_k(\vec{v}_{j'})\in V(F)$. Then we claim step $j^*$ is a vertex savings in coordinate $k$. Indeed, step $j^*$ does not constitute a new vertex by the definition of $j^*$, and step $j^*$ cannot be a delayed vertex since step $j^*-1$ gives a new vertex with respect to $F$ (and thus by the $P_3$-preserving property of $\pi_k$, $\pi_k(\vec{e}_{j^*}) \not\in E(F\cup \bigcup_{i'=1}^{j^*-1}\pi_k(\vec{e}_{i'})$). Therefore, step $j^*$ gives a vertex savings as claimed, completing the proof.
\end{proof}

\section{Theta Graphs and Complete Bipartite Subdivisions}\label{sec::thetaKtt}

Recall that $\varTheta(a,b)$ consists of $b$ internally-disjoint paths of $a$ edges each with the same two endpoints, and $K_{a,b}^\ell$ is the graph obtained from $K_{a,b}$ by replacing each edge with a path with $\ell$ edges. In this section, we apply the techniques of Section~\ref{sec::homo} to show that $\varTheta(a,b)$ is $(r,\frac{a-1}a)$-nice for all $a > r \geq 2$, $b\geq 2$, and that $K_{3,b}^\ell$ is $(r,1-\frac{2}{3\ell})$-nice for $b,\ell \geq 3$, and $3 \leq r \leq 6$ with $r<\frac{3}2\ell$.

\subsection{Revealing Theta Graphs}\label{sec::theta}

For the following theorem, we will use the fact that for  $a,b\geq 2$, 
\begin{equation}\label{equation theta extremal number}
\ex(n, \varTheta(a,b)) = O(n^{1+1/a}),
\end{equation}
and that this result is tight when $b$ is sufficiently large compared to $a$~\cite{BT,FS}.

\begin{thm}\label{thm::theta}
Let $a>r\geq 2$. Then $\varTheta(a,b)$ is $(r,1-\frac{1}{a})$-nice for all $b\geq 2$ and $r$-nice when $b=2$ or $b$ is sufficiently large compared to $a$. Consequently, letting $\ell:=2+b(a-1)=|V(\varTheta(a,b))|$, we have
\[
f\left(n,r\ell,\binom{r\ell}{2}-(r-1)ab+1\right)=\Omega\left(n^{\frac{r}{r-1}\cdot\frac{a-1}a}\right).
\]
\end{thm}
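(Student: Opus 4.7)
The plan is to apply the color energy framework of Sections~\ref{sec::color energy} and~\ref{sec::homo}. Suppose for contradiction that some $(r\ell, \binom{r\ell}{2} - (r-1)ab + 1)$-coloring $\chi$ of $K_n$ uses fewer than $c \cdot n^{r(a-1)/((r-1)a)}$ colors, for a suitably small $c > 0$. I would first construct a pruned $r$-th color energy graph $\vec{G}' = (\vec{V}', \vec{E}')$ via Proposition~\ref{prop::pruned exists}. Combining Proposition~\ref{prop::color energy} with~\eqref{equation theta extremal number} yields $|\vec{E}'| = \Omega(n^{r(a+1)/a})$, which for small $c$ strictly exceeds $\ex(|\vec{V}'|, \varTheta(a,b)) = O(n^{r(a+1)/a})$. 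Hence $\vec{G}'$ contains a copy $\vec{T}$ of $\varTheta(a,b)$; label its endpoints $\vec{u}_0, \vec{u}_1$ and internally-disjoint paths $P_1, \ldots, P_b$. Since $|V(H \cup \pi(\vec{T}))|$ is bounded in terms of $r, a, b$ and the average degree of $\vec{G}'$ grows with $n$, one can further arrange (selecting $\vec{T}$ to pass through a high-degree vertex) for an $(H \cup \pi(\vec{T}))$-reservoir $\vec{R}$ of size $\lceil D/r \rceil$ with source $\vec{u}_0$.

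The main step will be to reveal $\vec{T}$ starting from $H := \pi(\vec{u}_0)$, path-by-path with $P_i$ canonically ordered from $\vec{u}_0$ to $\vec{u}_1$, and to establish $\mathrm{sav}(\vec{T}, H, \sigma) \geq r(b-1)$. For each $i \geq 2$, both endpoints of $P_i$ already lie in the previously revealed graph $F_{i-1}$; in particular $\pi_k(\vec{u}_1) \in V(F_{i-1})$ for every coordinate $k$. Applying Lemma~\ref{lem::eventual savings} with $j' = a$ coordinate-wise: either (Case B) some edge of $\pi_k(P_i)$ is not yet present, producing a savings in coordinate $k$ on $P_i$; or (Case A) every such edge is already present, forcing a delay in coordinate $k$ at every step of $P_i$. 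The savings from Case B, combined with the delayed-vertex compensation terms $\mathds{1}_{d_j>0}(r-d_j)/(r-1)$ arising from Case A coordinates, must be aggregated across all $b-1$ returning paths to reach the required threshold.

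Given this bound, Theorem~\ref{thm::homo with right reps} produces $H^* \subseteq G$ with $|V(H^*)| \leq |V(H)| + rab - r(b-1) = r(2 + b(a-1)) = r\ell$ and at least $(r-1)ab$ new repetitions: a clique on at most $r\ell$ vertices of $K_n$ containing at least $(r-1)ab$ color repetitions, contradicting $\chi$ being a $(r\ell, \binom{r\ell}{2} - (r-1)ab + 1)$-coloring. Thus $\chi$ uses $\Omega(n^{r(a-1)/((r-1)a)})$ colors, so $\varTheta(a,b)$ is $(r, 1 - 1/a)$-nice. The $r$-nice strengthening when $b = 2$ or $b$ is sufficiently large relative to $a$ follows from the matching lower bound $\ex(n, \varTheta(a,b)) = \Theta(n^{1+1/a})$ from~\cite{BT, FS}, which ensures that $(r, 1-1/a)$-niceness is the strongest $(r, \alpha)$-niceness consistent with the known extremal bound.

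The hard part will be the savings analysis. A coarse count---using only that the return edge of each $P_i$ contributes $s_a + d_a = r$---shows $S + D \geq r(b-1)$, which is enough for the vertex bound $|V(H \cup \pi(\vec{T}))| \leq r\ell$ but not directly for $\mathrm{sav}$, since the delayed-vertex compensation $(r-d_j)/(r-1)$ vanishes precisely when $d_j = r$. In the worst case where many coordinates on some $P_i$ fall into Case A, the Lemma~\ref{lem::eventual savings} savings on that path alone may be strictly less than $r$, so one must carefully track the interaction between savings and delayed-vertex compensation along each path---and aggregate across $P_2, \ldots, P_b$---to ensure their combined total reaches $r(b-1)$.
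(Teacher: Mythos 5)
Your outline captures the high-level shape of the argument, but there are three genuine gaps, and the paper's proof differs from yours in a way that is not just cosmetic.

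First, the reservoir. You find only a copy of $\varTheta(a,b)$ in the pruned energy graph and then say one can "further arrange" a reservoir of size $\lceil D/r\rceil$ by passing through a high-degree vertex. This is exactly the nontrivial step, and it cannot be done on the copy itself: the vertices of $\vec{T}$ and anything adjacent to them in $\vec{G}'$ may share coordinates with $H\cup\pi(\vec{T})$, violating condition 3 of an $H\cup\pi(\vec{T})$-reservoir. The paper instead finds a much larger structure: a copy of $\varTheta(a,2ra^2b)$ (for $b\ge 3$) or of $C_{2a}\cup S_{2a(r+1)}$ (for $b=2$), whose extremal number has the same order. The extra paths or pendant vertices are what supply the reservoir, after a careful greedy selection guaranteeing that the chosen first-step vertices $\vec{v}_{i,1}$ (or $\vec{x}_{j,i}$) share no coordinates with previously-used material. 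This selection is possible because those vertices are pairwise at distance $2$ in $\vec{G}'$, so the pruning guarantees each "coordinate conflict" eliminates at most one candidate.

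Second, the savings count. You set $H=\pi(\vec{u}_0)$ and aim for $r(b-1)$ savings from paths $P_2,\dots,P_b$, but you concede the problem yourself: if for some $i\ge 2$ every coordinate of $\pi_k(P_i)$ is already contained in $F_{i-1}$, then $d_j=r$ on every step of $P_i$, the compensation term $\mathds{1}_{d_j>0}(r-d_j)/(r-1)$ vanishes, and that path contributes $0$ to $\mathrm{sav}$. Nothing in your setup forbids this, and without the paper's fresh-first-step selection it genuinely can occur. The paper dodges this entirely: it takes $H=\pi(\vec{v}_0)\cup\pi(\vec{v}_a)$ and then for each chosen path $\vec{P}_i$, the first edge $\pi_k(\vec{v}_0\vec{v}_{i,1})$ is guaranteed new (because $\vec{v}_{i,1}$ is a fresh vertex), so Lemma~\ref{lem::eventual savings} applies cleanly in every coordinate and gives $r$ savings per path, hence $\mathrm{sav}\ge rb$ with no case analysis at all. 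The vertex budget still works out since $|V(H)|\le 2r$ and $t=rb$ gives $|V(H^*)|\le 2r+rab-rb=r\ell$.

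Third, the $r$-niceness for $b=2$ is misargued. You invoke a matching lower bound $\ex(n,\varTheta(a,2))=\ex(n,C_{2a})=\Theta(n^{1+1/a})$; this is only known for $a\in\{2,3,5\}$, not in general, which is exactly why the paper points out the connection to the even-cycle Tur\'an problem. The paper's $b=2$ case establishes $(r,\alpha)$-niceness for \emph{every} $\alpha$ with $\ex(n,C_{2a})=O(n^{2-\alpha})$, by showing $\ex(n,C_{2a}\cup S_{2a(r+1)})=\Theta(\ex(n,C_{2a}))$ and running the argument against that extremal number—so the conclusion tracks the unknown true exponent. For $b\ge 3$ your reasoning ($r$-niceness from the known $\Theta(n^{1+1/a})$ bound for $b$ large) is fine, but for $b=2$ a separate argument of the paper's type is needed.

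So while your framework and the target inequality $\mathrm{sav}\ge r(b-1)$ could in principle be made to work, the missing reservoir construction and the missing argument that ruling out Case A (or compensating for it) is possible are the real content of the proof, and the paper resolves both by finding a larger forbidden configuration, not by a more careful accounting on the bare $\varTheta(a,b)$.
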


\begin{proof} 
	Let $G=(V,E)$ be a complete graph $K_n$, let $C$ be a set of colors, and let $\chi:E\rightarrow C$ be an $(r\ell, \binom{r\ell}{2} - (r-1)ab+1)$-coloring of $G$, where $\ell = 2 + b(a-1)$. Since $\chi$ is an $(r\ell, \binom{r\ell}{2} - r\ell+3)$-coloring, by Proposition~\ref{prop::pruned exists} there exists a pruned $r$-th energy graph $\vec{G}=(\vec{V},\vec{E})$ of $G$.
	
	We address the cases $b=2$ and $b\geq 3$ separately. For the $b\geq 3$ case, we will only prove that $\Theta(a,b)$ is $(r,1-\frac{1}{a})$-nice. For $b$ sufficently large compared to $a$, this will give us that $\Theta(a,b)$ is $r$-nice since~\eqref{equation theta extremal number} is tight for $b$ large compared to $a$.
	
	For $b \geq 3$, we actually find $\varTheta(a,b')$ in the pruned energy graph, where $b'$ is much larger than $b$, and use this $\varTheta(a,b')$ to find a copy of $\varTheta(a,b)$ with a reservoir. For $b=2$, $\varTheta(a,b) = C_{2a}$, and the extremal number $\ex(n,C_{2a})$ is only known (up to constant factors) when $a=2$, $3$, or $5$. Thus for $b=2$, we must find the reservoir differently than for $b\geq 3$.
	
	\textbf{Case 1:} $b=2$. Let $\alpha>0$ be a constant such that $\mathrm{ex}(n,C_{2a})=O(n^{2-\alpha})$. Note that $\alpha\leq 1$. Let $C_{2a}\cup S_k$ be the graph on $2a+k$ vertices formed from the cycle $C_{2a}$ by adding $k$ degree $1$ vertices all adjacent to the same vertex in the cycle. We claim that for constant $k$,
	\[
	\mathrm{ex}(n,C_{2a}\cup S_k)\leq \mathrm{ex}(n,C_{2a})+2a(2a+k)n=\Theta(\mathrm{ex}(n,C_{2a})).
	\]
	Indeed, a graph on $n$ vertices with $\mathrm{ex}(n,C_{2a})+2a(2a+k)n$ edges must contain at least $2a(2a+k)n/2a=(2a+k)n$ edge-disjoint copies of $C_{2a}$, and so there must be a vertex $v$ in $2a+k$ copies. This implies that $v$ is in a copy of $C_{2a}$ and has degree at least $2a+k$, so even if $v$ has $2a-1$ neighbors on this cycle, this still leaves $k$ neighbors outside of the cycle, forming a copy of $C_{2a}\cup S_k$.
	
	Now, suppose for the sake of contradiction that $\vec{G}$ contains a copy $\vec{C}$ of $C_{2a}\cup S_{2a(r+1)}$. Let $\vec{v}$ denote the vertex of degree $2a(r+1)+2$ in $\vec{C}$, and $\vec{u}$ denote a neighbor of $\vec{v}$ in $\vec{C}$ of degree 2 in $\vec{C}$. Let $\vec{X}$ be the set of degree $1$ vertices in $\vec{C}$. We will let $H=\pi(\vec{u}\vec{v})$, and $\vec{T}$ be the $\vec{u}\text{--}\vec{v}$-path of length $2a-1$ in $\vec{C}$. Let $\vec{R}$ consist of those vertices of $\vec{X}$ whose coordinates are disjoint from $H \cup \pi(\vec{T})$, so that $\vec{R}$ is a $H\cup \pi(\vec{T})$-reservoir. We claim that $|\vec{R}|\geq 2a$. Note that there are at most $2ar$ vertices in $H\cup\pi(\vec{T})$. Since all vertices in $\vec{X}$ are distance at most two from each other, each vertex in $H\cup\pi(\vec{T})$ is a coordinate in at most one vertex in $\vec{X}$. This forbids at most $2ar$ vertices of $\vec{X}$ from being in $\vec{R}$, and hence $|\vec{R}| \geq 2a$. 
	
	Now, let us reveal $\vec{T}$, where $E(\vec{T})$ is canonically ordered from $\vec{u}$ to $\vec{v}$. We claim that $S(\vec{T},H)\geq r$. Indeed, by the $P_3$-preserving property, the first edge revealed constitutes a new vertex in all $r$ coordinates, and then by Lemma~\ref{lem::eventual savings}, since the endpoint $\pi_k(\vec{v})$ of $\pi_k(\vec{T})$ is in $V(H)$ for each $k\in [r]$, we get at least one savings in each coordinate, giving us $r$ savings altogether.
	
	Therefore, by Theorem~\ref{thm::homo with right reps} applied with $t=r$, there is a graph $H^*\subseteq G$ with
	\[
	|V(H^*)|\leq |V(H)|+r|E(\vec{T})|-r\leq 2r+r(2a-1)-r=2ar,
	\]
	and at least
	\[
	(r-1)|E(\vec{T})|=2a(r-1)
	\]
	color repetitions in $H^*$, contradicting the choice of coloring on $G$. Thus, $\vec{G}$ does not contain a copy of $C_{2a}\cup S_{2a(r+1)}$, so
	\[
	|E(\vec{G})|\leq \mathrm{ex}(n^r,C_{2a}\cup S_{2a(r+1)})=O\left(n^{r(2-\alpha)}\right).
	\]
	By Propositions~\ref{prop::color energy} and \ref{prop::pruned exists}, this implies $|C|=\Omega\left(n^{\frac{\alpha r}{r-1}}\right)$. This means that $C_{2a}$ is $(r,\alpha)$-nice for any $\alpha$, and thus $C_{2a}$ is $r$-nice.

	\textbf{Case 2:} $b\geq 3$. We wish to show that $\varTheta(a,b)$ is $(r,1-1/a)$-nice.
	Suppose for the sake of contradiction that $\vec{G}$ contains a copy of $\varTheta := \varTheta(a,2ra^2b)$. Let $\vec{v}_0,\vec{v}_a\in \vec{V}$ denote the vertices of degree greater than 2 in $\varTheta$. Label the paths $\vec{P}_{i} = (\vec{v}_0=\vec{v}_{i,0}, \vec{v}_{i,1}, \dots, \vec{v}_{i,a-1}, \vec{v}_{i,a}=\vec{v}_a)$ for $1 \leq i \leq 2ra^2b$. We may select a sequence of distinct paths $\vec{P}_{i_1}, \dots, \vec{P}_{i_{ab+b}}$ such that $\vec{v}_{i_j,1}$ has no coordinates in common with any vertices in $\{\vec{v}_0,\vec{v}_a\}\cup\bigcup_{k=1}^{j-1} V(\vec{P}_{i_k})$ for all $1 \leq j \leq ab+b$. This is possible because there are at most $(a-1)(j-1)+2 \leq (a-1)(ab+b)+2\leq 2a^2b$ vertices in this union, creating at most $2ra^2b$ possible coordinate conflicts with possible choices for $\vec{v}_{i_j,1}$. Since the vertices $\vec{v}_{i,1}$ for $1 \leq i \leq 2ra^2b$ are all distance $2$ from one another, our pruning guarantees that each possible coordinate conflict eliminates at most one choice of $\vec{v}_{i,1}$ for $\vec{v}_{i_j,1}$. Thus, there exists such a choice for $i_1, \dots, i_{ab+b}$. We reorder the paths so that these paths come first (that is, $i_j = j$ for all $1\leq j \leq ab+b$), and we discard the remaining paths.
	
	Now let $\vec{T}=\bigcup_{i=1}^b \vec{P}_{i}$, $H=\pi(\vec{v}_0)\cup \pi(\vec{v}_a)$, and $\vec{R}=\{\vec{v}_{i,1}\mid b+1\leq i\leq b+ab\}$. Let $\sigma$ be an ordering of $E(\vec{T})$ such that the edges of $\vec{P}_{i}$ appear before the edges of $\vec{P}_{j}$ for all $1\leq i<j\leq b$, and within each path, the edges are given the canonical ordering from endpoints $\vec{v}_0$ to $\vec{v}_a$. By the ordering placed on the paths in $\varTheta$, the vertices in $\vec{R}$ do not have any coordinates in common with each other or any vertices in $\vec{T}$, so $\vec{R}$ is a $H\cup\pi(\vec{T})$-reservoir with source $\vec{v}_0$.
	
	We claim that $S(\vec{T},H)\geq rb$. In fact, each path $\vec{P}_{i}$ gives us $r$ vertex savings when revealed with respect to $H\cup\bigcup_{j=1}^{i-1}\pi(\vec{P}_{j})$. Indeed, since $\pi_k(\vec{v}_a)\in H$ and since the edge $\pi_k(\vec{v}_0\vec{v}_{i,1})\not\in H\cup\bigcup_{j=1}^{i-1}\pi(\vec{P}_{j})$ by the ordering placed on the paths, Lemma~\ref{lem::eventual savings} implies that there is a vertex savings in coordinate $k$ as we reveal $\vec{P}_{i}$ for each $k\in [r]$. Thus
	\[
	\mathrm{sav}(\vec{T},H,\sigma)\geq rb.
	\]
	Therefore, by Theorem~\ref{thm::homo with right reps} applied with $t=rb$, there is a graph $H^*\subseteq G$ with
	\[
	|V(H^*)| \leq |V(H)|+r|E(\vec{T})|-rb = 2r+rab-rb=r\ell
	\]
	and at least
	\[
	(r-1)|E(\vec{T})|=(r-1)ab
	\]
	repetitions. This contradicts the choice of coloring on $G$, so $\vec{G}$ does not contain a copy of $\varTheta(a,2ra^2b)$. Thus,
	\[
	|E(\vec{G})|\leq \mathrm{ex}(n^r,\varTheta(a,2ra^2b))=O\left(n^{r+\frac{r}a}\right)
	\]
	and hence by Propositions~\ref{prop::color energy} and \ref{prop::pruned exists}, $|C|=\Omega\left(n^{\frac{r}{r-1}\cdot\frac{a-1}{a}}\right)$.
\end{proof}

\subsection{Revealing Subdivided Complete Bipartite Graphs}\label{sec::subKtt}

To prove the following theorem, we will use the bound  $\ex(n,K_{a,b}^{\ell})=O(n^{1+\frac{a-1}{a\ell}})$ given by Janzer~\cite{J2}. 

\begin{theorem}\label{thm::subKtt}
Let $b\geq 3$, $\ell \geq 2$, and $2 \leq r \leq 6$ with $r<\frac{3}{2} \ell$. Then $K_{3,b}^\ell$ is $(r,1-\frac{2}{3\ell})$-nice. In other words, letting $s:=|V(K_{3,b}^{\ell})|=3+b+3(\ell-1)b$ and noting that $|E(K_{3,b}^{\ell})|=3b\ell$, we have
\[
f\left(n,rs,\binom{rs}2-3(r-1)b\ell+1\right)=\Omega\left(n^{\frac{r}{r-1}\left(1-\frac{2}{3\ell}\right)}\right).
\]
\end{theorem}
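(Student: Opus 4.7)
The plan mirrors the proof of Theorem~\ref{thm::theta}. Start with an arbitrary $(rs,\binom{rs}2-3(r-1)b\ell+1)$-coloring $\chi:E\to C$ of $K_n$. The hypothesis $r<\tfrac{3}{2}\ell$ ensures $3(r-1)b\ell-1\leq rs-3$, so Proposition~\ref{prop::pruned exists} produces a pruned $r$-th energy graph $\vec{G}=(\vec{V},\vec{E})$. The goal is to show that $\vec{G}$ contains no copy of $K_{3,B}^{\ell}$ for a sufficiently large constant $B=B(r,b,\ell)$. Combined with Janzer's bound $\ex(n,K_{3,B}^{\ell})=O(n^{1+2/(3\ell)})$ and Propositions~\ref{prop::color energy} and~\ref{prop::pruned exists}, this would yield $|C|=\Omega(n^{\frac{r}{r-1}(1-2/(3\ell))})$.

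Suppose for contradiction that $\vec{G}$ contains a copy of $K_{3,B}^{\ell}$ with degree-$B$ vertices $\vec{a}_1,\vec{a}_2,\vec{a}_3$, degree-$3$ vertices $\vec{c}_1,\ldots,\vec{c}_B$, and $\ell$-edge paths $\vec{P}_{i,j}$ from $\vec{a}_i$ to $\vec{c}_j$. Using the $P_3$-preserving property and a greedy argument as in Theorem~\ref{thm::theta}, select (after reindexing) $J=[b]$ so that the coordinates of $\{\vec{a}_1,\vec{a}_2,\vec{a}_3,\vec{c}_1,\ldots,\vec{c}_b\}$ are pairwise disjoint and every internal vertex of each selected path $\vec{P}_{i,j}$ has coordinates disjoint from those of $\vec{a}_1,\vec{a}_2,\vec{a}_3$. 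Set $H=\pi(\vec{a}_1)\cup\pi(\vec{a}_2)\cup\pi(\vec{a}_3)$ and $\vec{T}=\bigcup_{i\in[3],\,j\in[b]}\vec{P}_{i,j}$. From indices $j\in[B]\setminus[b]$, harvest the first internal vertices of $\vec{P}_{1,j}$ with coordinates disjoint from $H\cup\pi(\vec{T})$; these are neighbors of $\vec{a}_1$ in $\vec{G}$ and form an $(H\cup\pi(\vec{T}))$-reservoir $\vec{R}$ with source $\vec{a}_1$ of size exceeding $\lceil D/r\rceil$, provided $B$ is polynomially large in $r,b,\ell$.

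Order $E(\vec{T})$ by iterating $j=1,\ldots,b$ and, for each $j$, revealing $\vec{P}_{1,j}$, then $\vec{P}_{2,j}$, then $\vec{P}_{3,j}$, each canonically from $\vec{a}_i$ to $\vec{c}_j$. After revealing $\vec{P}_{1,j}$, every coordinate of $\vec{c}_j$ lies in the current graph. The chosen coordinate-disjointness prevents the first edge of $\vec{P}_{2,j}$ from being already revealed in any coordinate, so Lemma~\ref{lem::eventual savings} applied in each coordinate $k\in[r]$ yields a vertex savings along $\vec{P}_{2,j}$; likewise $\vec{P}_{3,j}$ contributes $r$ further savings. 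Summing over $j$ gives $S\geq 2rb$, so $\mathrm{sav}(\vec{T},H,\sigma)\geq 2rb$. Theorem~\ref{thm::homo with right reps} with $t=2rb$ then produces $H^*\subseteq G$ with $|V(H^*)|\leq 3r+3rb\ell-2rb=rs$ and at least $(r-1)\cdot 3b\ell=3(r-1)b\ell$ color repetitions, contradicting $\chi$.

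The main obstacle is the combinatorial selection of $J$ with sufficient coordinate-disjointness. Since the pruning property only separates coordinates of vertices at distance at most two in $\vec{G}$, while each path has length $\ell\geq 2$, the required disjointness of all internal vertices from the top vertices $\vec{a}_i$ must be secured via repeated greedy choices inside a large $K_{3,B}^{\ell}$. The number of blocked candidates at each greedy step grows with $r$, while only three top vertices anchor the configuration, and this is where the restriction $r\leq 6$ is expected to enter. Additional bookkeeping is needed for unavoidable internal coordinate collisions between distinct paths $\vec{P}_{i,j}$ and $\vec{P}_{i',j'}$; these only inflate $\mathrm{sav}(\vec{T},H,\sigma)$ above $2rb$, and any resulting excess delayed vertices are absorbed by the reservoir via Theorem~\ref{thm::homo with right reps}.
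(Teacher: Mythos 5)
Your high-level scaffolding matches the paper's: form the pruned $r$-th energy graph, assume a large $K_{3,B}^\ell$ lives in it, choose pages greedily, take $H=\pi(\vec{a}_1)\cup\pi(\vec{a}_2)\cup\pi(\vec{a}_3)$, build $\vec{T}$ and a reservoir, and invoke Theorem~\ref{thm::homo with right reps}. But the central step --- that each page automatically contributes $2r$ to the total savings --- has a real gap.

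First, the coordinate-disjointness you require cannot be secured. You ask that $\vec{a}_1,\vec{a}_2,\vec{a}_3$ be pairwise coordinate-disjoint and that every internal vertex of every selected path avoid the coordinates of the $\vec{a}_i$; but the $\vec{a}_i$ are fixed once the copy of $K_{3,B}^\ell$ is fixed (they are at pairwise distance $2\ell\geq 4$, so the pruning gives no disjointness), and for $\ell\geq 3$ the deeper internal vertices of a path are at distance $\geq 4$ from the $\vec{a}_i$, so arbitrarily many pages can have an internal vertex sharing a coordinate with some $\vec{a}_i$. No choice of $B$ polynomial in $r,b,\ell$ repairs this; the greedy selection in the paper only controls the \emph{first} internal vertices $\vec{x}_{j,i}$, which are at distance $2$ from earlier such vertices.

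Second, and more seriously, your last paragraph claims that residual coordinate collisions ``only inflate $\mathrm{sav}(\vec{T},H,\sigma)$ above $2rb$.'' This is false. When, say, $\pi_k(\vec{a}_1)=\pi_k(\vec{a}_2)$, the walk $\pi_k(\vec{P}_{2,j})$ may coincide with $\pi_k(\vec{P}_{1,j})$ in all but its last edge; then revealing $\vec{P}_{2,j}$ produces mostly \emph{delayed vertices} in coordinate $k$, which can drive the total savings of the page \emph{below} $2r$. This is precisely the phenomenon that the paper's ``bad coordinate'' analysis and Lemma~\ref{lem::extraordinary} are designed to handle: a chapter is built either from a single page when it gives $\geq 2r$ savings, or, failing that, from the three first paths $\vec{P}_{1,3i-2},\vec{P}_{1,3i-1},\vec{P}_{1,3i}$, each certified (via Lemmas~\ref{lem::purepaths} and~\ref{lem::path w one s,d} and a delicate count of delayed-vertex contributions) to supply at least $2r/3$. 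The restriction $r\leq 6$ does not come from the greedy selection, as you conjecture, but from the final numerical inequality in Lemma~\ref{lem::extraordinary}, which just barely closes for $r\in\{2,3,4,5,6\}$. As you observe, for $\ell=2$ the disjointness you want is forced by pruning and the simple argument does work; the whole difficulty is $\ell\geq 3$, which your proposal dismisses.
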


\begin{proof}
Let $G=(V,E)$ be a complete graph $K_n$, let $C$ be a set of colors, and let $\chi:E\rightarrow C$ be an $(rs, \binom{rs}{2} - 3(r-1)b\ell+1)$-coloring of $G$, where $s=|V(K_{3,b}^{\ell})|= 3+b+3(\ell-1)b$. Since $\chi$ is an $(rs, \binom{rs}{2} - rs+3)$-coloring, by Proposition~\ref{prop::pruned exists}, there exists a pruned $r$-th energy graph $\vec{G}=(\vec{V},\vec{E})$ of $G$.

Assume to the contrary that there is a copy $\vec{K}$ of $K_{3,30rb\ell^2}^\ell$ in $\vec{G}$. Let $\{\vec{a}_1,\vec{a}_2,\vec{a}_3\}$ be the set of vertices of degree $30rb\ell^2$ in $\vec{K}$, $\{\vec{b}_1,\dots,\vec{b}_{30rb\ell^2}\}$ be the set of vertices of degree $3$ in $\vec{K}$,  and $\vec{x}_{j,i}$ and $\vec{y}_{j,i}$ denote the first and last vertices (that are not $\vec{a}_j$ or $\vec{b}_i$) along the $\vec{a}_j\text{--}\vec{b}_i$ geodesic in $\vec{K}$. We will let $\vec{P}_{j,i}$ denote this geodesic. 
Let $\vec{Q}_i$ denote the subgraph of $\vec{K}$ induced by $\bigcup_{j=1}^3 V(\vec{P}_{j,i})$, which we call a \emph{page}.

We may select a sequence of distinct pages $\vec{Q}_{i_1}, \dots, \vec{Q}_{i_{3b(1+\ell)}}$ such that $\vec{x}_{j,i_p}$ has no coordinates in common with any vertex of $\vec{Q}_{i_{p'}}$ for any $1 \leq p' < p \leq 3b(1+\ell)$ and $1\leq j\leq 3$. This is possible because there are at most 
\[
\sum_{p'=1}^{p-1}|V(\vec{Q}_{i_{p'}})|\leq (3\ell+1)(p-1) \leq (3\ell+1)\cdot 3b(1+\ell)
\]
vertices whose coordinates we must exclude from choices of $\vec{x}_{j,i_p}$. Since the vertices $\vec{x}_{j,i}$ with the same $j$ are all distance two from each other, they must have distinct coordinates, so at most $(3r)(3\ell+1)(3b(1+\ell))< 30rb\ell^2$ choices for $\vec{Q}_{i_p}$ are excluded, and we can choose this sequence $i_1, \dots, i_{3b(1+\ell)}$. We reorder the pages so that these pages come first, that is, $i_p = p$ for all $1 \leq p \leq 3b(1+\ell)$, and we discard the remaining pages.

As in the proof of Theorem~\ref{thm::theta}, our goal is to apply Theorem~\ref{thm::homo with right reps}. It would be convenient to apply Theorem~\ref{thm::homo with right reps} to a graph $\vec{T}\subseteq \vec{G}$ where $\vec{T}\cong K_{3,b}^\ell$, however the total savings 
\[
S+\sum_{i=1}^m \mathds{1}_{(d_i>0)}\left(\frac{r-d_i}{r-1}\right)
\]
from \eqref{eq::total savings} may not be large enough for our purposes in this case. Instead, we will choose a graph $\vec{T}$ more carefully. In general, we will look at the pages $\vec{Q}_i$ one at a time, and if adding a page to $\vec{T}$ will increase the total savings by $2r$ (the amount we would expect from revealing this page if we were revealing color-isomorphic copies of $K_{3,b}^{\ell}$ in all $r$ coordinates), we will do so. If this does not happen, we will instead try to find a path $\vec{P}_{j,i}$, such that adding this path increases \eqref{eq::total savings} by $\frac{2r}3$. 

In order to help keep track of the total savings, we will build the graph $\vec{T}$ in $b$ `chapters,' where each chapter is three consecutive pages of $\vec{K}$.  Depending on how a chapter interacts in $G$ with the previously-revealed chapters, we determine which parts of those three pages to add to $\vec{T}$. Thus, we will use the pages $\vec{Q}_i$ for $1\leq i\leq 3b$ in our $b$ chapters. The remaining $3b\ell$ pages will yield a $\pi(\bigcup_{i=1}^{3b}\vec{Q}_i)$-reservoir of size $3b\ell$. Indeed, note that by the ordering we placed on the pages, $\vec{R}:=\{\vec{x}_{1,i}\mid 3b+1\leq i\leq 3b\ell\}$ is a $\pi(\bigcup_{i=1}^{3b}\vec{Q}_i)$-reservoir with source $\vec{a}_1$.

Let $\vec{T}_0$ be the empty graph on vertex set $V(\vec{T}_0)=\{\vec{a}_1,\vec{a}_2,\vec{a}_3\}$, and let $H^{(0)}:=\pi(\vec{T}_0)$. We call an ordering on any subset of $E(\vec{T})$ \emph{consistent} if it satisfies the following properties:
\begin{itemize}
    \item the edges on any path $\vec{P}_{j,i}$ are ordered canonically from $\vec{a}_j$ to $\vec{b}_i$,
    \item the edges of a path $\vec{P}_{j,i}$ appear before the edges of $\vec{P}_{j',i'}$ whenever $i<i'$ or $i=i'$ and $j<j'$.
\end{itemize}
Note that the first property implies that, for subsets of $E(\vec{T})$ which are unions of paths $\vec{P}_{j,i}$, consistent orderings are $H^{(0)}$-compatible.

We will assume throughout the rest of the proof that all subgraphs are revealed according to a consistent ordering, and will therefore drop any reference to orderings in our notation.

We will recursively define graphs $\vec{T}_1,\vec{T}_2,\dots,\vec{T}_b\subseteq \vec{K}$ such that the following hold:
\begin{itemize}
	\item $\mathrm{sav}(\vec{T}_i,H^{(0)}\cup\bigcup_{j=1}^{i-1}\pi(\vec{T}_j))\geq 2r$,
	\item $\vec{T}_i=\vec{P}_{j_1,i_1}\cup\vec{P}_{j_2,i_2}\cup\vec{P}_{j_3,i_3}$, where $3i-2\leq i_1,i_2,i_3\leq 3i$, and $j_1,j_2,j_3\in [3]$.
\end{itemize}

More specifically, $\vec{T}_i$ will either be equal to one of the pages $\vec{Q}_{3i-2},\vec{Q}_{3i-1},\vec{Q}_{3i}$, or $\vec{T}_i$ will contain the first path from each of these three pages. Let us assume that $\vec{T}_{i'}$ has been chosen to satisfy the above requirements for all $i'<i$, and let us define for each $i'\leq i$,
\[
H^{(i')}:=H^{(0)}\cup\bigcup_{j=1}^{i'-1}\pi(\vec{T}_{j}).
\] 
If for some $i'$ with $3i-2\leq i'\leq 3i$, we have
\begin{equation}\label{inequality ordinary page}
\mathrm{sav}\left(\vec{Q}_{i'},H^{(i-1)}\right)\geq 2r,
\end{equation}
then set $\vec{T}_i:=\vec{Q}_{i'}$. (If there is more than one choice of $i'$, choose one arbitrarily.) Otherwise, set $\vec{T}_i:=\bigcup_{i'=3i-2}^{3i}P_{1,i'}$.

Note that when $\ell=2$, we have $\mathrm{sav}(\vec{Q}_{i'},H^{(i-1)})\geq 2r$ for all $3i-2\leq i'\leq 3i$. To see why, note that in this case, $\vec{x}_{j,i'}=\vec{y}_{j,i'}$ for each $1\leq j\leq 3$, so not only do we have $\vec{x}_{j,i'}\notin H^{(i-1)}$, but also $\vec{x}_{j,i'}\neq\vec{x}_{j',i'}$ for $j\neq j'$. Therefore, $\pi_k(\vec{a}_2\vec{x}_{2,i'})$ is not in $H^{(i-1)}\cup\pi(\vec{P}_{1,i'})$ for any $k$, and Lemma~\ref{lem::eventual savings} guarantees that $S(\vec{P}_{2,i},H^{(i-1)})\geq r$. Repeating this argument with $\vec{P}_{3,i'}$ shows that $S(\vec{Q}_{i'},H^{(i-1)})\geq 2r$. 

When $\ell\geq 3$, the following technical lemma ensures that our choice of $\vec{T}_i$ gives at least $2r$ total savings.

\begin{lemma}\label{lem::extraordinary}
    Let $3i-2 \leq i' \leq 3i$. If
    \[
	\mathrm{sav}\left(\vec{Q}_{i'},H^{(i-1)}\right) < 2r,
	\]
    then,
    \[
	\mathrm{sav}\left(\vec{P}_{1,i'},H^{(i-1)}\cup\bigcup_{j'=3i-2}^{i'-1}\pi(\vec{P}_{1,j'})\right) \geq 2r/3.
	\]
\end{lemma}

We delay the proof of Lemma~\ref{lem::extraordinary} until we finish this proof of Theorem~\ref{thm::subKtt}. Lemma~\ref{lem::extraordinary} implies that for all $1\leq i\leq b$, there is a choice of $\vec{T}_i$ such that
\[
\mathrm{sav}(\vec{T}_i,H^{(i-1)})\geq 2r.
\]
Let $\vec{T}:=\bigcup_{i=1}^b \vec{T}_i$. Then, by repeated application of \eqref{eq::additivity}, we have that
\[
\mathrm{sav}(\vec{T},H^{(0)})=\sum_{i=1}^b\mathrm{sav}(\vec{T}_i,H^{(i-1)})\geq 2rb.
\]
Furthermore, recall that $\vec{R}$ is a $\pi(\bigcup_{i=1}^{3b}\vec{Q}_i)$-reservoir, and since $H^{(0)}\cup\pi(\vec{T})\subseteq \pi(\bigcup_{i=1}^{3b}\vec{Q}_i)$, $\vec{R}$ is an $H^{(0)}\cup\pi(\vec{T})$-reservoir of size $3b\ell\geq \lceil D(\vec{T},H^{(0)})/r\rceil$. Since $|E(\vec{T}_i)|=3\ell$, we have $|E(\vec{T})|=3b\ell$. Then Theorem~\ref{thm::homo with right reps} (applied with $H^{(0)}$ as $H$ and $2rb$ as $t$), gives us a graph $H^*\subseteq G$ with
\begin{align*}
|V(H^*)|\leq |V(H^{(0)})|+r|E(\vec{T})|-t\leq 3r+3rb\ell-2rb=r|V(K_{3,b}^\ell)|
\end{align*}
and at least
\[
(r-1)|E(\vec{T})|=(r-1)3b\ell
\]
repetitions, contradicting the choice of coloring on $G$. Thus, $\vec{G}$ is $K_{3,30rb\ell^2}^\ell$-free, so 
\[
|E(\vec{G})|\leq \mathrm{ex}\left(n^r,K_{3,30rb\ell^2}^\ell\right)=O\left(n^{r+\frac{2r}{3\ell}}\right). 
\]
Therefore, by Propositions~\ref{prop::color energy} and \ref{prop::pruned exists}, $|C|=\Omega\left(n^{\frac{r}{r-1}\left(1-\frac{2}{3\ell}\right)}\right)$.
\end{proof}

\begin{proof}[Proof of Lemma~\ref{lem::extraordinary}]

Fix $i'$ with $3i-2 \leq i' \leq 3i$, and assume that
\[
\mathrm{sav}\left(\vec{Q}_{i'},H^{(i-1)}\right) < 2r.
\]
Our goal is to show that the path $\vec{P}_{1,i'}$ gives at least $2r/3$ total savings. To prove this, we need to establish a couple of smaller technical lemmas.

\begin{lemma}\label{lem::purepaths}
    If $N_k(\vec{P}_{1,i'}, H^{(i-1)}) = \ell$, then
    $S_k(\vec{P}_{2,i'} \cup \vec{P}_{3,i'}, H^{(i-1)} \cup \pi(\vec{P}_{1,i'})) \geq 2$.
\end{lemma}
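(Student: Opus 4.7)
The plan is to apply Lemma~\ref{lem::eventual savings} to each of $\vec{P}_{2,i'}$ and $\vec{P}_{3,i'}$ in turn to extract at least one savings in coordinate $k$, summing to the required total of $2$. The central tool is property~3 of the pruned energy graph: the three vertices $\vec{y}_{1,i'}, \vec{y}_{2,i'}, \vec{y}_{3,i'}$ sit at pairwise distance $2$ through $\vec{b}_{i'}$ in $\vec{G}$, so their $k$-th coordinate images $\pi_k(\vec{y}_{1,i'}), \pi_k(\vec{y}_{2,i'}), \pi_k(\vec{y}_{3,i'})$ are pairwise distinct. The hypothesis $N_k(\vec{P}_{1,i'}, H^{(i-1)}) = \ell$ forces $\pi_k(\vec{P}_{1,i'})$ to be a simple path $u_0 u_1 \cdots u_\ell$ with $u_0 = \pi_k(\vec{a}_1)$ and $u_1, \dots, u_\ell \notin V(H^{(i-1)})$; setting $F := H^{(i-1)} \cup \pi(\vec{P}_{1,i'})$, each internal $u_j$ has only $u_{j-1}$ and $u_{j+1}$ as $F_k$-neighbors, while $u_\ell$'s sole $F_k$-neighbor is $u_{\ell-1} = \pi_k(\vec{y}_{1,i'})$.

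For $\vec{P}_{2,i'}$ revealed after $F$ in the canonical order from $\vec{a}_2$ to $\vec{b}_{i'}$, condition~(2) of Lemma~\ref{lem::eventual savings} holds with $j' = \ell$ because $\pi_k(\vec{b}_{i'}) = u_\ell \in V(F)$, so it suffices to find a step whose $k$-th coordinate edge is new. Suppose every edge of $\pi_k(\vec{P}_{2,i'})$ lies in $F_k$; then this length-$\ell$ walk from $\pi_k(\vec{a}_2) \in V(H^{(i-1)})$ to $u_\ell$ must enter $\pi_k(\vec{P}_{1,i'})$ through the unique meeting vertex $u_0$, and the restricted neighborhood structure together with the $P_3$-preserving property (forbidding immediate backtracking) forces it to traverse $u_0 u_1 \cdots u_\ell$ in order, consuming all $\ell$ steps. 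This pins $\pi_k(\vec{a}_2) = u_0$ and $\pi_k(\vec{y}_{2,i'}) = u_{\ell-1} = \pi_k(\vec{y}_{1,i'})$, contradicting pruning. Lemma~\ref{lem::eventual savings} therefore yields one savings.

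The parallel argument for $\vec{P}_{3,i'}$ revealed after $F' := F \cup \pi(\vec{P}_{2,i'})$ is the main obstacle, since the enlarged reference graph $F'$ could a priori absorb $\pi_k(\vec{P}_{3,i'})$ in ways $F$ cannot. I would again target the last edge $\pi_k(\vec{y}_{3,i'}) u_\ell$, which by pruning differs both from the $F_k$-edge $u_{\ell-1} u_\ell$ and from the terminal edge $\pi_k(\vec{y}_{2,i'}) u_\ell$ of $\pi_k(\vec{P}_{2,i'})$. The only way for it to lie in $F'_k$ would be for $\pi_k(\vec{P}_{2,i'})$ to revisit $u_\ell$ at some intermediate position $t$, so that the preimage vertex at position $t$ in $\vec{P}_{2,i'}$ shares the coordinate $\pi_k(\vec{b}_{i'})$ and an adjacent preimage vertex shares the coordinate $\pi_k(\vec{y}_{3,i'})$. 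Applying property~3 at these two pairs of preimage vertices constrains how far the intermediate vertex can sit from $\vec{b}_{i'}$ along $\vec{P}_{2,i'}$, and combining this with the length constraint $|E(\vec{P}_{2,i'})| = \ell$ and the $P_3$-preserving structure of $\pi_k(\vec{P}_{2,i'})$ should rule out the configuration, delivering the second savings via Lemma~\ref{lem::eventual savings}.
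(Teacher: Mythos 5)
Your argument for $\vec{P}_{2,i'}$ is correct, though you take a circuitous route: since $N_k(\vec{P}_{1,i'},H^{(i-1)})=\ell$ implies $\pi_k(\vec{b}_{i'})=u_\ell$ has degree $1$ in $F_k$ with unique neighbor $u_{\ell-1}=\pi_k(\vec{y}_{1,i'})\neq\pi_k(\vec{y}_{2,i'})$, the terminal edge $\pi_k(\vec{y}_{2,i'}\vec{b}_{i'})$ is immediately seen to be new, and applying Lemma~\ref{lem::eventual savings} with $j=j'=\ell$ gives the first savings without having to assume \emph{every} edge of $\pi_k(\vec{P}_{2,i'})$ lies in $F_k$.

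However, the $\vec{P}_{3,i'}$ step has a genuine gap. You correctly identify that the only way $\pi_k(\vec{y}_{3,i'}\vec{b}_{i'})$ can lie in $F'_k$ is for $\pi_k(\vec{P}_{2,i'})$ to revisit $u_\ell$ at an intermediate position $t<\ell$, and you then try to \emph{rule this out} via property~3 and the length constraint. This cannot work: the pruning property only forces $\ell-t\geq 3$ (and a similar bound on the neighboring position), which is perfectly consistent for $\ell$ not too small, so the configuration is not excluded. The paper's proof takes a different tack at exactly this point: rather than ruling out the revisit, it observes that \emph{if} $\pi_k(\vec{P}_{2,i'})$ visits $u_\ell$ twice, then $\vec{P}_{2,i'}$ alone already contributes two savings in coordinate $k$. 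Indeed, to reach $u_\ell$ by step $t<\ell$ the walk must leave the length-$\ell$ path $u_0\cdots u_\ell$ before arriving (one application of Lemma~\ref{lem::eventual savings}), and after the intermediate visit the walk must deviate again before returning to $u_\ell$ at step $\ell$, since $u_\ell$ then has degree at most two in the partially revealed graph and its two incident edges are both old (a second application). Thus the paper's argument is structured as a case split: either $S_k(\vec{P}_{2,i'},F)\geq 2$ and we are done, or $S_k(\vec{P}_{2,i'},F)=1$ and $u_\ell$ has degree at most $2$ in $F'_k$, whence $\pi_k(\vec{y}_{3,i'}\vec{b}_{i'})\notin E(F'_k)$ and $\vec{P}_{3,i'}$ yields the second savings. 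Your plan of extracting one savings from each of $\vec{P}_{2,i'}$ and $\vec{P}_{3,i'}$ unconditionally cannot be completed; you need this dichotomy.
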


\begin{proof}
    To simplify notation, let $H = H^{(i-1)}$, $\vec{b} = \vec{b}_{i'}$, $\vec{P}_j = \vec{P}_{j,i'}$, $\vec{x}_j = \vec{x}_{j,i'}$, and $\vec{y}_j = \vec{y}_{j,i'}$ for $j \in [3]$. 
    By Observation~\ref{obs::order invariance}, we are done unless $S_k(\vec{P}_2, H \cup \pi(\vec{P}_1)) \leq 1$. Since $N_k(\vec{P}_{1}, H) = \ell$, $\pi_k(\vec{b})$ is a new vertex, so this vertex has degree $1$ in $H\cup \pi_k(\vec{P}_1)$. Then, since $\pi_k(\vec{y}_2) \neq \pi_k(\vec{y}_1)$, the edge $\pi_k(\vec{y}_2 \vec{b})$ does not appear in $H \cup\pi_k(\vec{P}_1)$. Thus by Lemma~\ref{lem::eventual savings}, $S_k(\vec{P}_2, H \cup \pi(\vec{P}_1)) = 1$.
    
    Now we claim that $\pi_k(\vec{b})$ has degree at most two in $H\cup \pi_k(\vec{P}_1)\cup \pi_k(\vec{P}_2)$. Indeed, $\pi_k(\vec{b})$ has only $\pi_k(\vec{y}_1)$ as a neighbor in $H\cup \pi_k(\vec{P}_1)$, so if we assume towards a contradiction that $\pi_k(\vec{b})$ has degree at least $3$ in $H\cup \pi_k(\vec{P}_1)\cup \pi_k(\vec{P}_2)$, then $\pi_k(\vec{P}_2)$ must  visit $\pi_k(\vec{b})$ at least twice. 
    
    Since $\pi_k(\vec{P}_1)$ is length $\ell$, this implies that $\pi_k(\vec{P}_2)$ deviates from $\pi_k(\vec{P}_1)$ prior to visiting $\pi_k(\vec{b})$, encountering at least one edge not in $H\cup\pi_k(\vec{P}_1)$. Thus by Lemma~\ref{lem::eventual savings}, $\vec{P}_2$ gets a savings prior to visiting $\pi_k(\vec{b})$ for the first time. Let $\vec{P}_2'$ be the path containing all the edges of $\vec{P}_2$ coming before and including the edge that first revisits $\pi_k(\vec{b})$. Then $\pi_k(\vec{b})$ has degree at most two in $H\cup \pi_k(\vec{P}_1)\cup\pi_k(\vec{P}_2')$, and the only edges it can be adjacent to are the last edges of $\pi_k(\vec{P}_1)$ and $\pi_k(\vec{P}_2')$. Thus,  when  $\pi_k(\vec{P}_2)$ encounters a third edge adjacent to $\pi_k(\vec{b})$ after getting one savings,  Lemma~\ref{lem::eventual savings} implies that $\vec{P}_2$ will get a second savings in coordinate $k$, a contradiction of our assumption that $S_k(\vec{P}_2, H \cup \pi(\vec{P}_1)) \leq 1$.
    
    Finally, we claim $S_k(\vec{P}_3, H \cup \pi(\vec{P}_1 \cup \vec{P}_2)) \geq 1$, which will give us our second savings and finish the proof. Indeed, since $\pi_k(\vec{y}_1)$, $\pi_k(\vec{y}_2)$ and $\pi_k(\vec{y}_3)$ are all distinct, $\pi_k(\vec{b})$ must have degree at least $3$ in $H\cup\pi_k(\vec{P}_1\cup\vec{P}_2\cup\vec{P}_3)$, so $\pi_k(\vec{P}_3)$ must encounter a new edge adjacent to $\pi_k(\vec{b})$, and thus by Lemma~\ref{lem::eventual savings}, $\vec{P}_3$ gets at least one savings in coordinate $k$.
\end{proof}

\begin{lemma}\label{lem::path w one s,d}
    If $S_k(\vec{P}_{1,i'},H^{(i-1)})=1$ and $D_k(\vec{P}_{1,i'},H^{(i-1)})\leq 1$, then $S_k(\vec{Q}_{i'},H^{(i-1)})\geq 2$.
\end{lemma}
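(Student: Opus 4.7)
The plan is to use the additivity of savings along the consistent ordering that reveals $\vec{P}_{1,i'}$, then $\vec{P}_{2,i'}$, then $\vec{P}_{3,i'}$. Writing $H_1 := H^{(i-1)} \cup \pi(\vec{P}_{1,i'})$, this yields
\[
S_k(\vec{Q}_{i'},H^{(i-1)}) = S_k(\vec{P}_{1,i'},H^{(i-1)}) + S_k(\vec{P}_{2,i'},H_1) + S_k(\vec{P}_{3,i'},H_1 \cup \pi(\vec{P}_{2,i'})),
\]
so with $S_k(\vec{P}_{1,i'},H^{(i-1)}) = 1$ the claim reduces to showing that the last two terms sum to at least $1$. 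I would argue by contradiction, supposing both vanish.

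Under this assumption, since $\pi_k(\vec{b}_{i'})$ is the final vertex of $\pi_k(\vec{P}_{1,i'})$ and hence lies in $V(H_1)$, Lemma~\ref{lem::eventual savings} forces every edge of the walk $\pi_k(\vec{P}_{2,i'})$ to lie in $E(H_1)$ (otherwise we would obtain a savings upon eventually reaching $\pi_k(\vec{b}_{i'})$). The same argument applied to $\vec{P}_{3,i'}$ shows $\pi_k(\vec{P}_{3,i'}) \subseteq E(H_1)$ as well, using that $E(H_1 \cup \pi(\vec{P}_{2,i'}))$ coincides with $E(H_1)$ on coordinate $k$. In particular, the three terminal edges $\pi_k(\vec{y}_{j,i'}\vec{b}_{i'})$ for $j \in \{1,2,3\}$ all lie in $E(H_1)$; they are pairwise distinct since $\vec{y}_{1,i'}, \vec{y}_{2,i'}, \vec{y}_{3,i'}$ are pairwise at distance $2$ via $\vec{b}_{i'}$ in $\vec{G}$, forcing their $k$-th coordinates to be distinct by the pruning property.

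The next step is to use the budget $S_k(\vec{P}_{1,i'},H^{(i-1)}) + D_k(\vec{P}_{1,i'},H^{(i-1)}) \leq 2$ to bound the distinct edges of $E(H_1)$ incident to $\pi_k(\vec{b}_{i'})$. Any non-terminal visit of the walk $\pi_k(\vec{P}_{1,i'})$ to $\pi_k(\vec{b}_{i'})$ must be either a savings or delayed step, so $\pi_k(\vec{b}_{i'})$ is visited at most twice during the walk, contributing at most three distinct edges incident to it. I would split into cases on whether the terminal step of $\pi_k(\vec{P}_{1,i'})$ is a new-vertex, savings, or delayed step, and on whether the walk revisits $\pi_k(\vec{b}_{i'})$ beforehand. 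The cleanest case is when the terminal step introduces $\pi_k(\vec{b}_{i'})$ as a new vertex: then $\pi_k(\vec{b}_{i'}) \notin V(H^{(i-1)})$ and the only edge at $\pi_k(\vec{b}_{i'})$ in $H_1$ is $\pi_k(\vec{y}_{1,i'}\vec{b}_{i'})$, immediately contradicting the presence of three distinct incident edges.

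The main obstacle lies in the remaining subcases, where $\pi_k(\vec{b}_{i'})$ already lies in $V(H^{(i-1)})$ and so $H^{(i-1)}$ could a priori contribute further incident edges. Here I would combine a careful bookkeeping of walk-edges at $\pi_k(\vec{b}_{i'})$ (noting that producing three such distinct walk-edges would require the walk to detour through $\pi_k(\vec{b}_{i'})$, which consumes \emph{both} available non-new steps and thereby forces a specific structure on the walk) with the selection of the pages $\vec{Q}_{i_1},\dots,\vec{Q}_{i_{3b(1+\ell)}}$ earlier in the proof of Theorem~\ref{thm::subKtt}, to rule out that $H^{(i-1)}$ simultaneously supplies the remaining edges $\pi_k(\vec{y}_{2,i'}\vec{b}_{i'})$ and $\pi_k(\vec{y}_{3,i'}\vec{b}_{i'})$. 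This case-by-case accounting is where the bulk of the technical work lies.
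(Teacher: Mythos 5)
Your setup (assuming for contradiction that $S_k(\vec{P}_{2,i'},H_1) + S_k(\vec{P}_{3,i'},H_1\cup\pi(\vec{P}_{2,i'})) = 0$, deducing via Lemma~\ref{lem::eventual savings} that all edges of $\pi_k(\vec{P}_{2,i'})$ and $\pi_k(\vec{P}_{3,i'})$ lie in $E(H_1)$, and observing that the three terminal edges $\pi_k(\vec{y}_{j,i'}\vec{b}_{i'})$ are pairwise distinct edges of $E(H_1)$) matches the paper's opening moves, and your treatment of the case $\pi_k(\vec{b}_{i'}) \notin V(H^{(i-1)})$ via degree-counting at $\pi_k(\vec{b}_{i'})$ is essentially the paper's Case 2.

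However, the route you propose for the case $\pi_k(\vec{b}_{i'}) \in V(H^{(i-1)})$ has a genuine gap. You want to invoke the page-selection step to rule out that $H^{(i-1)}$ already contains the edges $\pi_k(\vec{y}_{2,i'}\vec{b}_{i'})$ and $\pi_k(\vec{y}_{3,i'}\vec{b}_{i'})$. But the page selection only guarantees that the coordinates of the vertices $\vec{x}_{j,i'}$ (those adjacent to $\vec{a}_j$) are disjoint from earlier pages; it imposes no constraint whatsoever on $\vec{y}_{j,i'}$ or $\vec{b}_{i'}$, whose coordinates may freely collide with $H^{(i-1)}$. So $H^{(i-1)}$ really can contain these two edges, and the ``count distinct edges at $\pi_k(\vec{b}_{i'})$'' strategy simply stalls. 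You also never invoke the hypothesis $D_k(\vec{P}_{1,i'},H^{(i-1)}) \le 1$ in this case, which is suspicious since the lemma is false without it.

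The paper closes this case by a different structural argument. It uses the fact that $\pi_k(\vec{x}_{2,i'}), \pi_k(\vec{x}_{3,i'}) \notin V(H^{(i-1)})$ (this \emph{is} what page selection buys you) to force the walks $\pi_k(\vec{P}_{2,i'})$, $\pi_k(\vec{P}_{3,i'})$ to begin by traversing edges of $\pi_k(\vec{P}_{1,i'})$ that are not in $H^{(i-1)}$, and hence to retrace the savings-free initial segment $P$ of $\pi_k(\vec{P}_{1,i'})$. The hypothesis $D_k \le 1$ enters precisely here: it forces $P$ to have at least $\ell - 1$ edges, so the three walks of length $\ell$ that all cover $P$ and terminate at $\pi_k(\vec{b}_{i'})$ differ in at most one edge; two of them must then traverse $P$ in the same orientation, making them identical, which contradicts $\pi_k(\vec{y}_{j_1,i'}) \ne \pi_k(\vec{y}_{j_2,i'})$. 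To repair your proof you would need to replace the edge-count-at-$\pi_k(\vec{b}_{i'})$ argument with this walk-retracing argument in the case $\pi_k(\vec{b}_{i'}) \in V(H^{(i-1)})$.
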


\begin{proof}
    Let $H = H^{(i-1)}$, $\vec{P}_j = \vec{P}_{j,i'}$, $\vec{b}=\vec{b}_{i'}$, $\vec{x}_j = \vec{x}_{j,i'}$, and $\vec{y}_j = \vec{y}_{j,i'}$ for $j \in [3]$. Let $P$ denote the initial segment of $\pi_k(\vec{P}_1)$ that terminates at the vertex in which $\vec{P}_1$ gets its vertex savings, and note that since $D_k(\vec{P}_1,H)\leq 1$, $P$ must contain at least $\ell-1$ edges. We are done unless $S_k(\vec{P}_2\cup\vec{P}_3, H \cup \pi(\vec{P}_1)) = 0$. Since $\pi_k(\vec{P}_2)$ and $\pi_k(\vec{P}_3)$ terminate in $H \cup \pi(\vec{P}_1)$, Lemma~\ref{lem::eventual savings} implies that when revealing $\vec{P}_2$ and $\vec{P}_3$, we only have delayed vertices.
    
    \textbf{Case 1:} $\pi_k(\vec{b}) \in V(H)$. Since $\pi_k(\vec{x}_2)$ and $\pi_k(\vec{x}_3)$ are not in $H$,  the $P_3$-preserving property implies that an initial segment of $\pi_k(\vec{P}_2)$ and $\pi_k(\vec{P}_3)$ share the same edges as $P$, either traversing the edges of $P$ in order or in reverse. Note that since all three of the walks $\pi_k(\vec{P}_j)$ cover the edges of $P$ in this way, some pair of these walks must traverse $P$ in the same order, say $\pi_k(\vec{P}_{j^*_1})$ and $\pi_k(\vec{P}_{j^*_2})$ with $j^*_1<j^*_2$. Since $P$ has at least $\ell-1$ edges, there is at most one edge in $\pi_k(P_{j^*_1})$ or $\pi_k(P_{j^*_2})$ that is not in $P$: the terminal edge. As both $\pi_k(P_{j^*_1})$ and $\pi_k(P_{j^*_2})$ terminate at $\pi_k(\vec{b})$ and are identical except possibly at the final edge, they are actually the same walk. This implies $\pi_k(\vec{y}_{j^*_2}) = \pi_k(\vec{y}_{j^*_1})$, a contradiction. 
    
    \textbf{Case 2:} $\pi_k(\vec{b}) \not\in V(H)$. If the degree of $\pi_k(\vec{b})$ in $H \cup \pi(\vec{P}_1)$ is less than $3$, either $\pi_k(\vec{y}_2 \vec{b})$ or $\pi_k(\vec{y}_3\vec{b})$ was not revealed when we revealed $\vec{P}_1$, so via Lemma~\ref{lem::eventual savings}, there will be a second vertex savings when either $\vec{P}_2$ or $\vec{P}_3$ are revealed. The only remaining situation to consider is when $\pi_k(\vec{b})$ has degree at least $3$ in $H\cup\pi(\vec{P}_1)$. This implies that $\pi_k(\vec{P}_1)$ never returns to a vertex in $H$ and visits $\pi_k(\vec{b})$ twice, getting a vertex savings on the very last step, implying that $P=\pi_k(\vec{P}_1)$, and that there are exactly $\ell$ edges in $P$, and the only edge of $P$ that contains a vertex in $H$ is the initial edge. Then since $\pi_k(\vec{x}_2)$ and $\pi_k(\vec{x}_3)$ are not in $H$ and neither $\vec{P}_2$ nor $\vec{P}_3$ have any vertex savings, $\pi_k(\vec{P}_2)$ and $\pi_k(\vec{P}_3)$ have the same initial edge as $\pi_k(\vec{P}_1)$, and then must contain only edges in $\pi_k(\vec{P}_1)$ until possibly returning to $H$. However, it is impossible for $\pi_k(\vec{P}_2)$ or $\pi_k(\vec{P}_3)$ to return to $H$, since these are $P_3$-preserving walks of length $\ell$, and $P$ has length $\ell$. There are only two walks of length $\ell$ on the edges of $\pi_k(\vec{P}_1)$ ending in $\pi_k(\vec{b})$, one of which is $\pi_k(\vec{P}_1)$ itself, so two of the walks $\pi_k(\vec{P}_j)$ are identical. However, the walks $\pi_k(\vec{y}_j)$ are distinct, so this is a contradiction.
\end{proof}

    With these two lemmas in hand, let us return to the proof of Lemma~\ref{lem::extraordinary}. 

	Let $H^*:=H^{(i-1)}\cup\bigcup_{j'=3i-2}^{i'-1}\pi(\vec{P}_{1,j'})$. We wish to calculate the total savings for the page $\vec{Q}_{i'}$, so we will consider the $r$ different coordinates of this page separately.
	
	We will call the coordinate $k\in [r]$ \emph{good} if $S_k(\vec{Q}_{i'},H^{(i-1)})\geq 2$, and \emph{bad} otherwise. Note that if all $k$ coordinates are good, $\mathrm{sav}(\vec{Q}_{i'},H^{(i-1)})\geq 2r$, so we are done unless we have at least one bad coordinate. Fix $k\in [r]$ and assume coordinate $k$ is bad. By the contrapositive of Lemma~\ref{lem::purepaths}, $N_k(\vec{P}_{1,i'},H^{(i-1)})<\ell$. Recall that $\pi_k(\vec{x}_{1,i'}) \not\in H^{(i-1)}$, so we have $N_k(\vec{P}_{1,i'},H^{(i-1)})>0$. Since new vertices can only be followed by new vertices or savings, then $S_k(\vec{P}_{1,i'},H^{(i-1)}) = 1$. Furthermore, by Lemma~\ref{lem::path w one s,d}, $D_k(\vec{P}_{1,i'},H^{(i-1)})\geq 2$, so the vertex savings does not happen on the last or second-to-last step of revealing $\vec{P}_{1,i'}$. 
	
	We claim that after this vertex savings happens, every other step of revealing $\vec{Q}_{i'}$ will consist of a delayed vertex in coordinate $k$. Indeed, since there cannot be a second vertex savings in coordinate $k$, if there is a step that is not a delayed vertex, it must be a new vertex. First consider the possibility that the step in which we reveal the edge $\vec{y}_{1,i'}\vec{b}_{i'}$ is a new vertex. In this case, the edge $\pi_k(\vec{y}_{2,i'}\vec{b}_{i'})$ cannot be in $H^{(i-1)}\cup \pi_k(\vec{P}_{1,i'})$. Thus, by Lemma \ref{lem::eventual savings}, as we reveal $\vec{P}_{2,i'}$, we encounter a new edge, while $\pi_k(\vec{b}_{i'})$ already has been revealed, so we get a second savings in coordinate $k$, contradicting that coordinate $k$ is bad. 
	
	Thus, the final step of revealing $\vec{P}_{1,i'}$ is a delayed vertex, and so every step between the vertex savings and this final step is also a delayed vertex. Furthermore, once $\vec{P}_{1,i'}$ is revealed, if $E(\pi_k(\vec{P}_{2,i'}\cup\vec{P}_{3,i'}))\setminus E(H^{(i-1)}\cup\pi(\vec{P}_{1,i'}))\neq \emptyset$, Lemma~\ref{lem::eventual savings} gives us a second savings in coordinate $k$, since $\pi_k(\vec{b}_{i'})$ has been revealed. Thus, every step after the vertex savings in coordinate $k$ gives us a delayed vertex in coordinate $k$.
	
	Now we can calculate $\mathrm{sav}(\vec{Q}_{i'},H^{(i-1)})$. Let there be $b^*$ bad coordinates and $r^*$ coordinates $k$ such that $N_k(\vec{P}_{1,i'},H^{(i-1)})=\ell$. Note that we can assume $b^*\geq 1$. We claim that we are done unless $r^*\geq \lfloor r/3\rfloor+1$. Indeed, if not, note that since $N_k$ is monotone with respect to subgraphs (see \eqref{eq::Nmonotone}), there are at most $\lfloor r/3\rfloor$ coordinates $k$ such that $N_k(\vec{P}_{1,i'},H^*)=\ell$, and thus, there are at least $r-\lfloor r/3\rfloor\geq 2r/3$ coordinates $k$ with $S_k(\vec{P}_{1,i'},H^*)\geq 1$. Therefore, we have 
	\[
	S(\vec{P}_{1,i'},H^*)\geq 2r/3,
	\]
	as desired.
	
	Now, since each bad coordinate has a vertex savings, and all other coordinates have at least two vertex savings,
	\[
	S(\vec{Q}_{i'},H^{(i-1)})\geq b^*+2(r-b^*)=2r-b^*.
	\]
	Furthermore, we established that the last two steps of revealing $\vec{P}_{1,i'}$ in each bad coordinate give us a delayed vertex, while in the $r^*$ coordinates with no vertex savings in the first path, these last two steps are not delayed vertices, so these two steps contribute at least
	\[
	\frac{2r^*}{r-1}
	\]
	to the total savings. Now, let us consider
	\[
	\sum_{i^*}\mathds{1}_{(d_{i^*}>0)}\left(\frac{r-d_{i^*}}{r-1}\right),
	\]
	where the sum is taken over all steps $i^*$ that correspond to revealing $\vec{P}_{2,i'}$ and $\vec{P}_{3,i'}$. By Lemma~\ref{lem::purepaths}, when revealing $\vec{P}_{2,i'}$ and $\vec{P}_{3,i'}$, each of the $r^*$ coordinates with no savings in the first path give us two vertex savings, which occurs after $\vec{P}_{1,i'}$ has already been revealed, and consequently these two steps do not constitute delayed vertices. Thus,
	\begin{align*}
	\sum_{i^*}\mathds{1}_{(d_{i^*}>0)}\left(\frac{r-d_{i^*}}{r-1}\right)=\frac{\sum_{i^*}(r-d_{i^*})}{r-1}\geq \frac{2r^*}{r-1},
	\end{align*}
	where the first equality follows since we have at least one bad coordinate, and in that coordinate, there is a delayed vertex in every step $i^*$ considered in the sum. 
	We now calculate the total savings. We have that
	\begin{equation}\label{inequality final savings bound}
	\mathrm{sav}(\vec{Q}_{i'},H^{(i-1)})\geq 2r-b^*+\frac{4r^*}{r-1}.
	\end{equation}
	Using the fact that $b^*\leq r-r^*$ and that $r^*\geq \lfloor r/3\rfloor +1$, the bound in \eqref{inequality final savings bound} is always at least $2r$ when $r\in \{3,4\}$, so we obtain a contradiction in these cases.
	
	Now, let us assume that $r\in\{5,6\}$. Note that our earlier analysis gives us that $b^*\leq 3$, however in this case, we claim that $b^*\leq 2$. To see this, let us assume to the contrary that $b^*=3$. Now, let us consider $\mathrm{sav}(\vec{P}_{1,i'},H^*)$. 
	Due to the monotonicity of delayed vertices with respect to subgraphs, see \eqref{eq::Dmonotone}, we know that when revealing $\vec{P}_{1,i'}$ with respect to $H^*$, the final two steps still give us delayed vertices in each of the three bad coordinates. This implies that
	\[
	S(\vec{P}_{1,i'},H^*)\geq 3,
	\]
	since the first step of revealing $\vec{P}_{1,i'}$ must constitute a new vertex in every coordinate, and so we must have at least one vertex savings in each bad coordinate before we can encounter the delayed vertices in the last two steps of revealing the path (Lemma~\ref{lem::eventual savings}). Furthermore, the total vertex savings must be exactly $3$ here, since $2r/3\leq 4$ for $r\in \{5,6\}$, and we are done unless $\vec{P}_{1,i'}$ does not have $2r/3$ total savings. As we have already observed, this implies that the $r^*$ coordinates that are not bad give us new vertices at every step of $\vec{P}_{1,i'}$, thus, based on the delayed vertices in the bad coordinates, we can see that
	\[
	\mathrm{sav}(\vec{P}_{1,i'},H^*)\geq 3+2\frac{3}{r-1}\geq 4\geq 2r/3.
	\]
	
	Otherwise, $b^*\leq 2$ in this case, and we still have that $r^*\geq \lfloor r/3\rfloor+1$, so we see that in this case, the right side of \eqref{inequality final savings bound} is at least $2r$ for $r\in\{5,6\}$, completing the proof.
\end{proof}

\section{Future Work}\label{sec::conc}

We believe that Theorem~\ref{thm::subKtt} can be generalized further, although more case analysis and new ideas are needed.
\begin{conjecture}
	For positive integers $a,b,\ell,r$ with $a\leq b$ and $\frac{r}{r-1}(1-\frac{a-1}{a\ell})\geq 1$, the graph $K_{a,b}^{\ell}$ is $r$-nice.
\end{conjecture}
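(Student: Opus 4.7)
The plan is to follow the same framework used to prove Theorem~\ref{thm::subKtt} for $a=3$, generalizing each ingredient to arbitrary $a$. Let $s := |V(K_{a,b}^\ell)| = a + b + ab(\ell-1)$ and $m := |E(K_{a,b}^\ell)| = ab\ell$. Start with an $(rs,\binom{rs}{2}-(r-1)ab\ell+1)$-coloring of $K_n$ and pass to a pruned $r$-th energy graph $\vec{G}$ (which exists by Proposition~\ref{prop::pruned exists} since our coloring forbids monochromatic $(rs-1)$-stars). Assume for contradiction that $\vec{G}$ contains a copy of $K_{a,B}^\ell$ for $B = \Theta(ra^2b\ell^2)$ chosen large enough to carry out the coordinate-avoidance argument below. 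Label the small side $\vec{a}_1,\dots,\vec{a}_a$, the large side $\vec{b}_1,\dots,\vec{b}_B$, and let $\vec{P}_{j,i}$ denote the subdivided $\vec{a}_j\vec{b}_i$-edge, with pages $\vec{Q}_i := \bigcup_{j=1}^a \vec{P}_{j,i}$.

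As in the proof of Theorem~\ref{thm::subKtt}, reorder the pages so that the first $ab(\ell+1)$ of them satisfy: for each such page $\vec{Q}_i$, the vertex $\vec{x}_{j,i}$ shares no coordinate with any vertex of any earlier page, for all $j \in [a]$. This is possible because each such exclusion removes at most one candidate among the $B$ available (by the pruning property and the fact that all $\vec{x}_{j,i}$ with the same $j$ are at distance two in $\vec{G}$), and the number of forbidden coordinates is at most $(a\ell+1)\cdot ab(\ell+1)\cdot r \ll B$. Use the first $ab$ pages to form $b$ \emph{chapters} of $a$ consecutive pages each, and let $\vec{R} := \{\vec{x}_{1,i} : ab < i \leq ab(\ell+1)\}$ serve as a reservoir of size $ab\ell$ with source $\vec{a}_1$.

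For each chapter, construct $\vec{T}_i$ recursively as follows: if some page $\vec{Q}_{i'}$ in the chapter satisfies $\mathrm{sav}(\vec{Q}_{i'}, H^{(i-1)}) \geq (a-1)r$, set $\vec{T}_i := \vec{Q}_{i'}$; otherwise set $\vec{T}_i := \bigcup_{i'} \vec{P}_{1,i'}$, where the union is over the $a$ pages of the chapter. Letting $\vec{T} := \bigcup_i \vec{T}_i$ and assuming each chapter contributes at least $(a-1)r$ total savings, \eqref{eq::additivity} gives $\mathrm{sav}(\vec{T},H^{(0)}) \geq (a-1)rb$, and a direct calculation shows that Theorem~\ref{thm::homo with right reps} applied with $H^{(0)} = \pi(\{\vec{a}_1,\dots,\vec{a}_a\})$ and $t = (a-1)rb$ yields a subgraph of $K_n$ on at most $ar + r\cdot ab\ell - (a-1)rb = rs$ vertices with at least $(r-1)ab\ell$ repetitions, contradicting the coloring. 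The $K_{a,B}^\ell$-freeness of $\vec{G}$ together with $\ex(n,K_{a,B}^\ell) = O(n^{1+\frac{a-1}{a\ell}})$ and Proposition~\ref{prop::color energy} then gives $|C| = \Omega(n^{\frac{r}{r-1}(1-\frac{a-1}{a\ell})})$.

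The main obstacle is the analog of Lemma~\ref{lem::extraordinary}: showing that whenever no single page in a chapter achieves $(a-1)r$ total savings, each first-path $\vec{P}_{1,i'}$ in the chapter contributes at least $\frac{(a-1)r}{a}$ total savings (so that $a$ such paths together contribute $(a-1)r$). In the $a=3$ case this required a delicate classification of coordinates as \emph{good} or \emph{bad}, two auxiliary lemmas (Lemmas~\ref{lem::purepaths} and \ref{lem::path w one s,d}) controlling how secondary paths in a page interact with the first path, and a separate refinement for $r\in\{5,6\}$ forcing $b^* \leq 2$. Generalizing to arbitrary $a$ will require replacing the dichotomy good/bad by a finer stratification based on how many of the $a-1$ secondary paths $\vec{P}_{2,i'},\dots,\vec{P}_{a,i'}$ force a second savings in a given coordinate, together with an inductive argument on the number of bad coordinates. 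The hypothesis $\frac{r}{r-1}(1-\frac{a-1}{a\ell})\geq 1$ is precisely the condition that makes the final accounting inequality (the analog of $2r-b^* + \frac{4r^*}{r-1}\geq 2r$) solvable, but verifying it uniformly in $(a,r,\ell)$ rather than by case split on small $r$ is the essential difficulty and likely requires a more conceptual combinatorial argument than the case analysis carried out for $a=3$.
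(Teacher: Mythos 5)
This statement is a \emph{conjecture} in the paper, not a theorem: the authors explicitly state in Section~\ref{sec::conc} that ``more case analysis and new ideas are needed'' to prove it. So there is no proof in the paper to compare your attempt against, and any purported proof needs to be checked for completeness on its own terms.

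Your outline correctly generalizes the framework of Theorem~\ref{thm::subKtt}: the pruned energy graph setup, the page/chapter decomposition, the reservoir construction, the savings target of $(a-1)r$ per chapter, and the vertex accounting (indeed $ar + rab\ell - (a-1)rb = r(a+b+ab(\ell-1)) = rs$, as required). You have also correctly identified that the hypothesis $\frac{r}{r-1}(1-\frac{a-1}{a\ell})\geq 1$, equivalently $a\ell \geq r(a-1)$, is what guarantees both that the resulting bound is superlinear and that a pruned $r$-th energy graph exists.

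However, what you have written is a plan, not a proof. You explicitly state that ``the main obstacle is the analog of Lemma~\ref{lem::extraordinary}'' and that ``verifying it uniformly in $(a,r,\ell)$ \ldots is the essential difficulty and likely requires a more conceptual combinatorial argument.'' This is precisely the gap: the dichotomy between ordinary pages (with $\mathrm{sav} \geq (a-1)r$) and the fallback first-path option (with $\mathrm{sav} \geq \frac{(a-1)r}{a}$) must be justified, and the $a=3$ argument leans heavily on Lemmas~\ref{lem::purepaths} and \ref{lem::path w one s,d}, a good/bad coordinate dichotomy, the inequality $2r - b^* + \frac{4r^*}{r-1} \geq 2r$, and a bespoke refinement for $r \in \{5,6\}$. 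None of these steps generalize automatically: the number of ways a secondary path can collapse onto earlier paths grows with $a$, the stratification by ``how many secondary paths force a second savings'' has $a-1$ levels instead of $2$, and the final inequality becomes a multi-parameter accounting whose feasibility is exactly what the hypothesis is supposed to secure but which you have not verified. In short, the skeleton is right but the load-bearing lemma is missing, and the paper's own authors flag this as an open problem; submitting this as a proof of the conjecture would be a genuine gap.
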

If a graph $F$ is $r$-nice, then upper bounds for $\ex(n,F)$ yield lower bounds on $f(n,p,q)$. As mentioned in Section~\ref{sec::intro}, in the contrapositive, this means upper bounds on $f(n,p,q)$ yield lower bounds on $\ex(n,F)$. It could be interesting to further explore this connection. For instance, Theorem~\ref{thm::theta} shows that $C_{2k}$ is $r$-nice, so obtaining new upper bounds on $f(n,2rk, \binom{2rk}{2} - 2(r-1)k + 1)$ would give lower bounds on $\ex(n,C_{2k})$, a well-known hard problem. The smallest interesting example of this implication is for $r=2$ and $k=4$. Proving $f(n,16,\binom{16}{2}-7) = O(n^{3/2})$ would imply that $\ex(n,C_8) = \Theta(n^{5/4})$. Currently the best upper bound for $f(n,16,\binom{16}{2}-7)$ is $O(n^{7/4})$, given by the local lemma bound in \eqref{eq::lll}.

Theorem~\ref{thm::PS} from \cite{PS} exploited another connection between $f(n,p,q)$ and Tur\'an numbers (not using the color energy graph). This connection appeared implicitly in \cite{PS}, so we make the connection explicit here.

\begin{lemma}\label{lem::turan}
Let $1<\gamma<2$ be fixed, and let $F$ be a bipartite graph with bipartition $V(F)=A\cup B$ which contains at least two edges. Suppose that every subgraph of $K_{n,n^\gamma}$ with $(\frac{n^2}{4})/(|A|+|E(F)|)$ edges contains a copy of $F$ with $A$ on the side of size $n$ and $B$ on the side of size $n^\gamma$. Then for $n$ sufficiently large,
\[
f\left(n,|A|+|E(F)|, \binom{|A|+|E(F)|}{2} - (|E(F)|-|B|)+1\right) \geq n^{\gamma} .
\]
\end{lemma}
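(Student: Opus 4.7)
The plan is to argue by contrapositive: assume $\chi : E(K_n) \to C$ is a coloring with $|C| < n^\gamma$ and produce a $p$-clique (with $p := |A| + |E(F)|$) containing at least $|E(F)| - |B|$ color repetitions, contradicting the $(p,q)$-coloring hypothesis for $q = \binom{p}{2} - (|E(F)|-|B|)+1$. First, I dispose of a ``big star'' case: if some vertex $v$ has $\deg_c(v) \geq p-1$ in some color $c$, then the clique on $v$ together with $p-1$ of its $c$-neighbors contains a monochromatic $K_{1,p-1}$, contributing at least $p-2$ repetitions; since $F$ has at least two edges we have $|V(F)| \geq 3$, so $p - 2 \geq |E(F)| - |B|$ and we are done. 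Hence I may assume $\deg_c(v) \leq p-2$ for every $v$ and $c$.

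The main construction is to fix any partition $V(K_n) = X \cup Y$ with $|X| = \lfloor n/2\rfloor$ and form the bipartite graph $H$ on $X \cup C$ by declaring $(x,c) \in E(H)$ iff $\chi(xy) = c$ for some $y \in Y$. The color-degree bound ensures each $x \in X$ meets at least $|Y|/(p-2)$ distinct colors among its $|Y|$ edges to $Y$, so $|E(H)| \geq |X||Y|/(p-2) \geq n^2/(4p)$ for $n$ sufficiently large. Viewing $H$ as a subgraph of $K_{n,n^\gamma}$ by padding the $X$-side with isolated vertices, the hypothesis of the lemma produces a copy of $F$ with $A$ on the $n$-side and $B$ on the $n^\gamma$-side: distinct vertices $v_1, \ldots, v_{|A|} \in X$ and distinct colors $c_1, \ldots, c_{|B|} \in C$ such that for every edge $a_ib_j \in E(F)$ one may choose $y_{ij} \in Y$ with $\chi(v_i y_{ij}) = c_j$.

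The crucial claim, and the reason for the $X$/$Y$ split, is that the $|E(F)|$ ``template'' edges $v_i y_{ij}$ are pairwise distinct as edges of $K_n$. Indeed, any coincidence $\{v_i, y_{ij}\} = \{v_{i'}, y_{i'j'}\}$ would force $v_i = v_{i'}$ and $y_{ij} = y_{i'j'}$ (since $X \cap Y = \emptyset$, a $v$ cannot equal a $y$), and then the color of this common edge forces $c_j = c_{j'}$, giving $(i,j) = (i',j')$. Thus the $|E(F)|$ distinct template edges use only the $|B|$ colors $c_1, \ldots, c_{|B|}$, yielding at least $\sum_{b \in B}(\deg_F(b)-1) = |E(F)| - |B|$ repetitions. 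Extending $\{v_i\} \cup \{y_{ij}\}$ (a set of size at most $p$) to an arbitrary $p$-clique of $K_n$ produces a clique with at least this many repetitions, contradicting the $(p,q)$-coloring assumption. The main obstacle in a naive one-sided version of this argument is precisely that without the partition one could encounter ``swap'' coincidences $v_i = y_{i'j'}$ and $v_{i'} = y_{ij}$ that collapse two template edges into one, leaving too few repetitions; the bipartite partition is engineered exactly to preclude this possibility.
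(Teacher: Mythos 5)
Your proof is correct, and it takes a genuinely different and arguably cleaner route than the paper's. The paper builds the color-incidence graph on all of $V \cup C$ (after first deleting every edge whose color class has fewer than $n^{(2-\gamma)/2}$ edges), extracts a copy of $F$, and then must confront the possibility that two of the template stars $S_v$, $S_{v'}$ share an edge --- precisely the ``swap'' coincidences you identify. It patches this by counting the number $a$ of such collisions and grafting on $a$ additional edges of colors from $B$, which is exactly why the rare-color deletion was needed in the first place: to guarantee those extra edges exist and avoid the vertices already used. Your bipartition $V = X \cup Y$, with the incidence graph built only from $X$-to-$Y$ edges, forecloses all such coincidences by construction (a $v_i \in X$ can never equal a $y_{i'j'} \in Y$), so the argument terminates without the preprocessing step and without any post-hoc counting and patching. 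The cost is roughly a factor of two in the size of the incidence graph, which is harmless against the $(n^2/4)/(|A|+|E(F)|)$ threshold in the hypothesis. The ``big star'' case you dispatch explicitly is, in the paper, absorbed into the observation that $q$ sits at or above the linear threshold $\binom{p}{2}-p+3$, which forbids monochromatic copies of $K_{1,p-1}$; the two observations are equivalent, so nothing is gained or lost there.
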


\begin{proof}
Let $G = (V,E)$ be a complete graph $K_n$, let $C$ be a set of colors, and let $\chi: E \to C$ be an $(|A|+|E(F)|, \binom{|A|+|E(F)|}{2} - (|E(F)|-|B|)+1)$-coloring of $G$. Note that when $p:=|A|+|E(F)|$, this coloring is above the linear threshold, $\binom{p}2-p+3$, so the color degrees in $G$ are bounded by $p-1=|A|+|E(F)|-1$. Suppose for the sake of contradiction that $|C| \leq n^{\gamma}$. First, form $G'$ by deleting from $G$ all edges whose color appears on fewer than $n^{\frac{2-\gamma}2}$ edges. In doing so, we delete at most $|C|n^{\frac{2-\gamma}2}\leq n^{\frac{2+\gamma}2}=o(n^2)$ edges total, so $G'$ still has $n^2/4$ edges.

We form the \emph{color incidence graph}, which is a bipartite graph with parts $V$ and $C$, and an edge between $v \in V$ and $c \in C$ if there is an edge of $G'$ incident to $v$ with color $c$. Since each vertex of $G'$ is incident to at most $|A|+|E(F)|-1$ edges of a given color, there are at least $(\frac{n^2}{4})/(|A|+|E(F)|)$ edges in the color incidence graph. By our hypothesis, this implies that there is a copy of $F$ in the color incidence graph with $A \subseteq V$ and $B \subseteq C$.

This copy of $F$ gives us a clique with many color repetitions in $G'$. In $G'$, for each vertex $v\in A$, there is a star, $S_v$, centered at $v$ with $d_F(v)$ edges, each of which are colored with a color from $B$. Let $S:=\bigcup_{v\in A} S_v$. If each of the stars $S_v$ are pairwise edge-disjoint for all $v\in A$, then $S$ contains at most
\[
\sum_{v\in A}|V(S_v)|=|A|+\sum_{v\in A}d_F(v)=|A|+|E(F)|
\]
vertices, and exactly
\[
\sum_{v\in A}|E(S_v)|-|B|=|E(F)|-|B|
\]
color repetitions, which contradicts our choice of coloring. Therefore, these stars are not pairwise edge-disjoint, and we let $a = \sum_{v\in A} |E(S_v)| - |E(S)|$. Then $S$ has at most
\[
\sum_{v\in A}|V(S_v)|-2a=|A|+\sum_{v\in A}d_F(v)-2a=|A|+|E(F)|-2a
\]
vertices and exactly
\[
|E(S)|-|B|=|E(F)|-a-|B|
\]
color repetitions. Since every color class in $G'$ has at least $n^{\frac{2-\gamma}{2}}>\binom{|V(S)|}2+a$ edges, we can choose a set of $a$ edges, disjoint from $E(S)$ that are colored with a color in $B$. These $a$ edges span at most $2a$ vertices, so adding them to $S$ gives us a graph on at most $|A|+|E(F)|$ vertices with $|E(F)|-|B|$ color repetitions, again a contradiction. Thus, $|C|>n^\gamma$.
\end{proof}

There is one famous example of an upper bound for asymmetric bipartite Tur\'an numbers: $F = K_{s,t}$ \cite{KST}. In this case, Lemma~\ref{lem::turan} recovers Theorem~\ref{thm::PS}. While other upper bounds for asymmetric bipartite Tur\'an numbers are known (for example, for theta graphs), we have found no further applications of Lemma~\ref{lem::turan} which give improvements on existing bounds. It could be fruitful to further explore this connection between $f(n,p,q)$ and asymmetric bipartite Tur\'an numbers.

\section*{Acknowledgements}

We thank the anonymous referees for their many useful comments and suggestions. This research was performed while the third author was at the University of Illinois Urbana-Champaign.

\bibliographystyle{abbrv}
\bibliography{arXiv_version}

\end{document}